\newtheorem{thm}{Theorem}[section]
\newtheorem{proposition}[thm]{Proposition}
\newtheorem{definition}[thm]{Definition}
\newtheorem{lemma}[thm]{Lemma}
\theoremstyle{remark}
\newtheorem{remark}[thm]{Remark}
\theoremstyle{definition}
\newtheorem{example}[thm]{Example}
\newcommand\ra{\rightarrow}
\newcommand\eps{\varepsilon}
\DeclareMathOperator{\sn}{sn}
\DeclareMathOperator{\abn}{abn}
\title{Embedding Properties in Central Products}
\author{Dandrielle Lewis, Ayah Almousa, and Eric Elert}
\date{\today}
\begin{document}
\maketitle

\begin{abstract}
In this article, we study group theoretical embedding properties of subgroups in central products of finite groups.  Specifically, we give characterizations of normal, subnormal, and abnormal subgroups of a central product of two groups, prove these characterizations, and provide examples.

\end{abstract}

\section{Introduction}
In group theory, studying subgroups of groups is important, but it is not easy to answer the question ``What can we say about the subgroups?'' for products of groups.  Hence, investigating subgroups that satisfy specific properties, known as characterizations, in products of groups is of particular interest.  Edouard Goursat first determined how to identify and describe a subgroup of a direct product of two groups in \cite{goursat1889}, and his result has been refined as can be seen in \cite{brewster12}.  Goursat's work provides the backbone necessary to understand subgroups of a direct product, and many mathematicians have since made significant contributions to this area.
This article presents characterizations of subgroups in a central product of two groups, where a central product is a direct product with an amalgamated center. 

Recently, more work has been done on subgroups of a direct product of two groups.  For example, in \cite{brewster10}, \cite{brewster12},\cite{brewster07}, and \cite{brewster09}, necessary and sufficient conditions have been provided to characterize subgroups of a direct product including normal and subnormal subgroups.  However, very little work has been done to characterize these subgroups for a central product of two groups.  In this article, we begin to characterize subgroups of central products, focusing on normal, subnormal, and abnormal subgroups.  We provide equivalent conditions for normality, subnormality, and abnormality, and give applications of our results.

In Section 2, we define and review basic group theory terms and discuss direct products by introducing notation necessary to understand propositions presented from other work on direct products.  In Section 3, central products are defined internally and externally, and a concrete example of a central product is provided.  In Section 4, we characterize normal subgroups by examining the commutator of subgroups of a central product; we prove this result and provide an application using the example introduced in Section 3.  In Section 5, subnormal subgroups are defined, and a characterization of subnormal subgroups of a central product and an example are provided.  In Section 6, we define abnormal subgroups, determine and prove lemmas concerning abnormal subgroups, and present a characterization of abnormal subgroups in a central product.  Finally, in Section 7, a summary of our results and future work on characterizations of pronormal subgroups of central products are given.

\section{Preliminaries}
In this article, we consider only finite groups. Our notation and terminology are standard, but we define terms and state lemmas that are necessary to understand later information.

Let $G$ be a group. We say $G$ is \textbf{\textit{simple}} if it has no normal subgroups other than itself and the trivial subgroup. For elements $a,b\in G$, we write $a^b=b^{-1}ab$, and if $A$ is a nonempty subset of $G$, $$A^G = \{A^g |\hspace{.1cm} g \in G\}.$$  The \textbf{\textit{commutator}} of $a$ and $b$, $[a,b]$, is given by $a^{-1}a^b$. For $A,B\leq G$, $[A,B]$ will denote the subgroup generated by all the commutators $[a,b]$ where $a\in A$ and $b\in B$. We define $[a_1,a_2,a_3]=[[a_1,a_2],a_3]$ and define commutators of $n$ elements recursively where $[a_1,a_2,\ldots,a_n] = [[a_1, a_2, ... a_{n-1}], a_n]$ for $a_i$ in a group $G$, for all $i \in \mathbb{N}, i\leq n$. The \textbf{\textit{center}} of a group $G$ is $Z(G)=\{g\in G|\hspace{.1cm} gx=xg\text{, } \forall x\in G\}$. 
The \textbf{\textit{derived series}} of a group $G$ is the series
$$
G=G^{(0)}\geq G^{(1)}\geq\cdots
$$
where $G^{(i)}=[G^{(i-1)},G^{(i-1)}]$. A group $G$ is \textbf{\emph{solvable}} if $G^{(i)}=1$ for some $i\geq 1$.

The following lemma contains classical results about the commutator subgroup $[A,B]$. This lemma is presented here because it is useful in later proofs.

\begin{lemma}\label{commproperties} Let $A,B,C$ be subgroups of a group $G$.
\begin{enumerate}
\item $[A,B]=[B,A]\unlhd \langle A,B\rangle$.
\item $[A,B]\leq A$ if and only if $B\leq N_G(A)$, where $N_G(A)$ is the normalizer of $A$ in $G$.
\item If $\alpha: G\ra \alpha(G)$ is a group homomorphism, then $\alpha([A,B])=[\alpha(A),\alpha(B)]$.
\item If $A$ and $B$ are normal subgroups of $G$, then $[A,B]$ is a normal subgroup of $G$.
\item $\langle A^B\rangle=A[A,B]$.
\end{enumerate}
\end{lemma}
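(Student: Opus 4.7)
My plan is to prove the five parts independently, relying throughout on the standard identities
$[a,b]^{-1}=[b,a]$, $[ab,c]=[a,c]^{b}[b,c]$, $[a,bc]=[a,c][a,b]^{c}$, and $[a,b]^{c}=[a^{c},b^{c}]$,
each of which follows directly from the definition $[x,y]=x^{-1}y^{-1}xy$. None of the arguments is deep, so the main job is to sequence the calculations so that later parts can quote earlier ones.

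For (i), I would first observe that the inverse of a generator $[a,b]$ of $[A,B]$ is the generator $[b,a]$ of $[B,A]$, so the two subgroups coincide. For the normality claim, it suffices to show that $[A,B]$ is normalized by the generating set $A\cup B$ of $\langle A,B\rangle$. Using $[ab,c]=[a,c]^{b}[b,c]$ I can rewrite $[a,b]^{a_1}=[aa_{1},b]\,[a_{1},b]^{-1}$ for $a_{1}\in A$, and both factors lie in $[A,B]$; the symmetric identity $[a,bc]=[a,c][a,b]^{c}$ handles conjugation by $b_{1}\in B$. Part (ii) reduces to the equivalence $[a,b]\in A\iff a^{b}\in A$, which is immediate from $[a,b]=a^{-1}a^{b}$; finiteness then upgrades $A^{b}\leq A$ to $A^{b}=A$, giving $B\leq N_{G}(A)$. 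Part (iii) is a one-line check: $\alpha$ sends each generator $[a,b]$ of $[A,B]$ to the generator $[\alpha(a),\alpha(b)]$ of $[\alpha(A),\alpha(B)]$, and both directions of the set equality follow.

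For (iv), I would use $[a,b]^{g}=[a^{g},b^{g}]$ together with the normality of $A$ and $B$ in $G$ to conclude that each generator of $[A,B]$ is sent into $[A,B]$ by conjugation, so $[A,B]\unlhd G$. For (v), the identity $a^{b}=a\cdot[a,b]$ shows that every generator of $\langle A^{B}\rangle$ lies in $A[A,B]$, giving $\langle A^{B}\rangle\leq A[A,B]$. The reverse inclusion uses $A=A^{1}\subseteq A^{B}$ to conclude $A\leq \langle A^{B}\rangle$ and then recovers $[a,b]=a^{-1}a^{b}\in \langle A^{B}\rangle$, so $[A,B]$ and hence $A[A,B]$ sit inside $\langle A^{B}\rangle$.

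There is no real obstacle: the only spot that requires care is the normality portion of (i), where one must resist the temptation to use $[a,b]^{a_{1}}=[a^{a_{1}},b^{a_{1}}]$, since $b^{a_{1}}$ need not lie in $B$. The rewriting via $[ab,c]=[a,c]^{b}[b,c]$ sidesteps this and keeps every commutator with entries in the correct subgroups, which is why I would prove (i) before invoking the conjugation identity freely in (iv).
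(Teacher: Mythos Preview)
Your proof is correct: the commutator identities you invoke are standard, and each of the five arguments goes through as written. The only place I would add a sentence is in (v), where you tacitly use that $A[A,B]$ is a subgroup; this follows from (i), since $A$ normalizes $[A,B]$, and is worth stating explicitly.

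The paper itself does not prove this lemma at all---it simply cites Doerk and Hawkes, \emph{Finite Soluble Groups}, as the source. So your approach is not so much different from the paper's as it is a replacement for an omitted argument. What you gain by writing it out is self-containment and a clear view of which identities drive each part (in particular your observation in (i) that one must avoid $[a,b]^{a_1}=[a^{a_1},b^{a_1}]$ and instead use the product expansion is exactly the point a reader unfamiliar with these computations would stumble on). What the paper gains by citing is brevity, since none of this is new.
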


\begin{proof}
See \cite{dh92}.
\end{proof}

Whenever we discuss direct products, they will be viewed externally unless otherwise stated. Let $U_1$ and $U_2$ be groups and consider the direct product $U_1\times U_2$. The maps $\pi_i: U_1\times U_2\rightarrow U_i$ given by $\pi_i((u_1,u_2))=u_i$ for $i=1,2$ are standard projections. $\overline{U}_1=\{(u_1,1)| \hspace{.1cm} u_1\in U_1\}$ and $\overline{U}_2=\{(1,u_2)| \hspace{.1cm} u_2\in U_2\}$, where $\overline{U}_i$ is a subgroup of $U_1\times U_2$ for $i=1,2$.

The following lemma is utilized throughout this article, and it provides us with a one-to-one correspondence between normal subgroups. This result can be found in abstract algebra textbooks such as \cite{dummitfoote} and \cite{hungerford} and follows from the classical Lattice Isomorphism Theorem.

\begin{lemma}\label{correspondence}
If $\varphi: G\ra H$ is an epimorphism of groups and $S_\varphi(G)=\{K\leq G | \ker\varphi\subseteq K\}$ and $S(H)$ is the set of all subgroups of $H$, then the assignment $K\mapsto \varphi(K)$ is a one-to-one correspondence between $S_\varphi(G)$ and $S(H)$. Under this correspondence, normal subgroups correspond to normal subgroups.
\end{lemma}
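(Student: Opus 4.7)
The plan is to define the candidate bijection $\Phi : S_\varphi(G) \to S(H)$ by $\Phi(K) = \varphi(K)$, then verify three things: well-definedness, bijectivity, and the normality correspondence. Well-definedness is immediate since the image of a subgroup under a homomorphism is a subgroup of the codomain.

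For injectivity, I would take $K_1, K_2 \in S_\varphi(G)$ with $\varphi(K_1) = \varphi(K_2)$ and show $K_1 \subseteq K_2$ (symmetry then gives equality). Given $k \in K_1$, I can pick $k' \in K_2$ with $\varphi(k) = \varphi(k')$, so that $k(k')^{-1} \in \ker\varphi \subseteq K_2$, forcing $k \in K_2$; this is exactly where the hypothesis $\ker\varphi \subseteq K_i$ is needed. For surjectivity, I would use that $\varphi$ is an epimorphism: given $L \leq H$, set $K := \varphi^{-1}(L)$, observe $\ker\varphi \subseteq K$, verify $K \leq G$ using the subgroup criterion, and check $\varphi(K) = L$ (the inclusion $L \subseteq \varphi(K)$ using surjectivity of $\varphi$).

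For the normality statement I would prove both directions. If $K \trianglelefteq G$ with $\ker\varphi \subseteq K$, then for any $h \in H$ I write $h = \varphi(g)$ (using surjectivity) and compute $h\,\varphi(K)\,h^{-1} = \varphi(gKg^{-1}) = \varphi(K)$, so $\varphi(K) \trianglelefteq H$. Conversely, if $L \trianglelefteq H$ and $K = \varphi^{-1}(L)$, then for $g \in G$ and $k \in K$ we have $\varphi(gkg^{-1}) = \varphi(g)\varphi(k)\varphi(g)^{-1} \in L$, so $gKg^{-1} \subseteq K$ and $K \trianglelefteq G$.

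There is no real obstacle here: this is the classical Correspondence (Lattice Isomorphism) Theorem, and the only subtle point is remembering to invoke the hypothesis $\ker\varphi \subseteq K$ in the injectivity step and the surjectivity of $\varphi$ in the forward direction of the normality correspondence. Because the result appears in standard textbooks, I would simply outline the bijection and the two normality implications and refer the reader to \cite{dummitfoote} or \cite{hungerford} for the detailed verification, as the excerpt already does.
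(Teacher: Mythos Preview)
Your proposal is correct and aligns with the paper's treatment: the paper simply writes ``See \cite{hungerford}'' for this lemma, since it is the classical Correspondence (Lattice Isomorphism) Theorem. Your outline is the standard textbook argument one would find there, and your closing remark to cite \cite{dummitfoote} or \cite{hungerford} is precisely what the paper does.
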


\begin{proof} See \cite{hungerford}.
\end{proof}

\begin{remark}
Let $\phi: G\ra H$ be a group epimorphism, and let $W\leq H$. For the remainder of this article, under such an epimorphism $\phi$, we will assume that $\ker\phi\leq\phi^{-1}(W)$, where $\phi^{-1}(W)$ is the preimage of $W$.
\end{remark}

\section{Central Products}
Central products have historically been useful for the characterization of extraspecial groups.  Let $p$ be a prime.  Extraspecial groups are $p-$groups where the center, commutator subgroup, and the intersection of all the maximal subgroups have order $p$.  All extraspecial groups of order $p^{2n+1}$ for some $n>0$ and $p=2$ or $p\neq2$ can always be written as a central product of extraspecial groups of order $p^3$ \cite{lgm02}. When discussing central products, we will adopt the definition and notation presented in \cite{dh92}:

\begin{definition}\label{cpdef} 
Let $U_1,U_2\leq G$. Then $G$ is an internal central product of $U_1$ and $U_2$ if
\begin{enumerate}
\item $G=U_1U_2$, and
\item $[U_1,U_2]=1$.
\end{enumerate}
\end{definition}

This definition is standard as can be seen in \cite{lgm02}, and observe that it implies that $U_i \unlhd G$ for $i = 1,2$ and $U_1 \cap U_2 \leq Z(U_1) \cap Z(U_2)$. The following lemma establishes the relationship between central products and direct products and serves as a powerful tool for proving results.

\begin{lemma}\label{cpepi}\cite{dh92}
Let $G$ be a group such that $U_1,U_2\leq G$, $D=U_1\times U_2$ is the external direct product, $\overline{U}_1=\{(u_1,1)| \hspace{.1cm} u_1\in U_1\}$ and $\overline{U}_2=\{(1,u_2)|\hspace{.1cm} u_2\in U_2\}$. Then the following are equivalent:
\begin {enumerate}
\item $G$ is an internal central product of $U_1$ and $U_2$.
\item There exists an epimorphism $\varepsilon : D\rightarrow G$ such that $\varepsilon(\overline{U}_1)=U_1$ and $\varepsilon(\overline{U}_2)=U_2$.
\end{enumerate}
\end{lemma}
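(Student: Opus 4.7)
The plan is to prove the two implications separately, with the explicit multiplication map $\varepsilon(u_1,u_2) = u_1u_2$ serving as the bridge in both directions.

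For (i) $\Rightarrow$ (ii), I would define $\varepsilon : D \to G$ by $\varepsilon(u_1,u_2) = u_1u_2$, which is well-defined since $U_1,U_2 \leq G$. The key step is verifying that $\varepsilon$ is a homomorphism: computing $\varepsilon((u_1,u_2)(v_1,v_2)) = u_1v_1u_2v_2$ versus $\varepsilon(u_1,u_2)\varepsilon(v_1,v_2) = u_1u_2v_1v_2$, the two agree precisely because $[U_1,U_2] = 1$ forces $u_2v_1 = v_1u_2$. Surjectivity is immediate from $G = U_1U_2$, and the identities $\varepsilon(\overline{U}_1) = U_1$, $\varepsilon(\overline{U}_2) = U_2$ follow by inspection from the definition of $\overline{U}_i$.

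For (ii) $\Rightarrow$ (i), assume an epimorphism $\varepsilon : D \to G$ with the stated image property exists. To see $G = U_1U_2$, I would take an arbitrary $g \in G$, use surjectivity to write $g = \varepsilon(u_1,u_2)$, and factor $(u_1,u_2) = (u_1,1)(1,u_2) \in \overline{U}_1\,\overline{U}_2$; applying $\varepsilon$ and using the given images yields $g \in U_1U_2$. To see $[U_1,U_2] = 1$, I would observe that $\overline{U}_1$ and $\overline{U}_2$ commute elementwise in $D$, so $[\overline{U}_1,\overline{U}_2] = 1$; applying Lemma~\ref{commproperties}(iii) with the homomorphism $\varepsilon$ gives
\[
[U_1,U_2] = [\varepsilon(\overline{U}_1),\varepsilon(\overline{U}_2)] = \varepsilon([\overline{U}_1,\overline{U}_2]) = \varepsilon(1) = 1.
\]

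I do not anticipate a serious obstacle here; this is essentially a bookkeeping argument. The only subtlety worth flagging is that in direction (i) $\Rightarrow$ (ii) one must really use $[U_1,U_2]=1$ (not just $G=U_1U_2$) to make $\varepsilon$ multiplicative, and in direction (ii) $\Rightarrow$ (i) the cleanest route to $[U_1,U_2]=1$ is through Lemma~\ref{commproperties}(iii) rather than a direct element-chase, since it avoids having to invert $\varepsilon$.
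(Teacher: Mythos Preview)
Your argument is correct in both directions; the multiplication map is exactly the right bridge for (i)$\Rightarrow$(ii), and the use of Lemma~\ref{commproperties}(iii) cleanly handles (ii)$\Rightarrow$(i). Note, however, that the paper does not supply its own proof of this lemma: it is stated with a citation to \cite{dh92} and used as a black box. Your write-up is the standard proof one would expect to find in that reference, so there is nothing to compare against beyond saying that you have filled in what the paper omits.
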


\begin{remark}
Throughout this article we will view central products internally wherever it is sensible. When central products are discussed externally, we will do so as is described in the following construction:
\end{remark}

\begin{thm}\label{externalcp}\cite{dh92}
Let $V_1, V_2$ be finite groups, and assume that $A$ is an abelian group for which there exists monomorphisms $\mu_i: A\rightarrow Z(V_i)$ for $i=1,2$.  Let $D$ denote the external direct product $V_1\times V_2$, let $\overline{V}_1=\{(v_1,1)|\hspace{.1cm} v_1\in V_1\}$, and  $\overline{V}_2=\{(1,v_2)|\hspace{.1cm} v_2\in V_2\}$.   Then $\overline{V}_i\cong V_i$. Set
$$
N=\{(\mu_1(a),\mu_2(a^{-1}))|\hspace{.1cm} a\in A\}
$$
Then $N\unlhd D$, $\overline{V}_i\cap N=1$, and with $U_i=\overline{V}_iN/N$, the quotient group $G=D/N$ has the following properties:
\begin{enumerate}
\item $G$ is a central product of the subgroups $U_1$ and $U_2$, and $U_i\cong V_i$ for $i=1,2$.
\item $U_1\cap U_2=A_iN/N\cong A$, where
\begin{align*}
A_1=\{(\mu_1(a),1)| \hspace{.1cm} a\in A\}\\
A_2=\{(1, \mu_2(a))| \hspace{.1cm} a\in A\}
\end{align*}
\end{enumerate} 
\end{thm}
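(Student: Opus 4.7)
The plan is to verify each claim in order: first confirm that $N$ is indeed a well-behaved normal subgroup of $D$, then check the intersection condition $\overline{V}_i \cap N = 1$, and finally use these to derive the central product structure of $G = D/N$ and compute the intersection $U_1 \cap U_2$.

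First I would check that $N$ is a subgroup of $D$. Using that $A$ is abelian and that $\mu_1,\mu_2$ are homomorphisms, one has
$$
(\mu_1(a),\mu_2(a^{-1}))(\mu_1(b),\mu_2(b^{-1})) = (\mu_1(ab),\mu_2((ab)^{-1})),
$$
so $N$ is closed under products and inverses. Because $\mu_i(A)\leq Z(V_i)$, every element of $N$ sits inside $Z(V_1)\times Z(V_2) = Z(D)$, which instantly gives $N\unlhd D$. For $\overline{V}_1\cap N=1$, any common element has the form $(v_1,1)=(\mu_1(a),\mu_2(a^{-1}))$; injectivity of $\mu_2$ forces $a=1$, hence $v_1=\mu_1(1)=1$. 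The argument for $\overline{V}_2\cap N=1$ is symmetric, and $\overline{V}_i\cong V_i$ is just the standard identification inside an external direct product.

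Next I would establish part (i). Since every element of $D$ factors as $(v_1,v_2)=(v_1,1)(1,v_2)$, passing to $G=D/N$ gives $G=U_1U_2$. For the commutator condition, elements of $\overline{V}_1$ commute with elements of $\overline{V}_2$ in $D$, so their cosets commute in $G$; thus $[U_1,U_2]=1$, and Definition \ref{cpdef} applies. The isomorphism $U_i\cong V_i$ follows from the second isomorphism theorem: $U_i=\overline{V}_iN/N\cong \overline{V}_i/(\overline{V}_i\cap N) = \overline{V}_i/1 \cong V_i$, using the intersection computation from the previous step.

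The most delicate part is (ii), identifying $U_1\cap U_2$. Take an element $xN\in U_1\cap U_2$; write $xN=v_1N=v_2N$ with $v_1\in\overline{V}_1$, $v_2\in\overline{V}_2$, so that $v_1v_2^{-1}\in N$. Writing $v_1=(w_1,1)$, $v_2=(1,w_2)$, the condition $(w_1,w_2^{-1})\in N$ forces $w_1=\mu_1(a)$ and $w_2=\mu_2(a)$ for some $a\in A$, so $v_1\in A_1$ and $v_2\in A_2$. This shows $U_1\cap U_2\subseteq A_1N/N$, and the reverse inclusion is immediate since $A_1\leq\overline{V}_1$ and $A_2\leq\overline{V}_2$ give $A_iN/N\subseteq U_i$, while $A_1N=A_2N$ because $(\mu_1(a),1)$ and $(1,\mu_2(a))$ differ by the element $(\mu_1(a),\mu_2(a^{-1}))\in N$. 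Finally, $A_1\cap N\leq\overline{V}_1\cap N=1$, so $A_1N/N\cong A_1\cong A$ via $\mu_1$; this is the step most likely to trip up a reader, since it pivots on the careful bookkeeping of which projection lives in which factor, but everything reduces to the two facts that $N\leq Z(D)$ and that $\overline{V}_i\cap N=1$.
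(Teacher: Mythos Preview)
The paper does not actually prove this theorem; it is quoted from Doerk--Hawkes \cite{dh92} and stated without proof. The paragraph following the statement in the paper is not a proof of Theorem~\ref{externalcp} but rather a separate verification that the map $\phi:D/N\to U_1U_2$, $\phi((a,b)N)=ab$, is well defined, establishing the isomorphism between the external construction $D/N$ and an internal central product $U_1U_2$.

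Your proposal, by contrast, is a full and correct proof of Theorem~\ref{externalcp} itself. The steps are all sound: the closure and normality of $N$ via $N\leq Z(D)$, the intersection $\overline{V}_i\cap N=1$ from injectivity of $\mu_2$ (resp.\ $\mu_1$), the central product structure of $G$ from the factorization $(v_1,v_2)=(v_1,1)(1,v_2)$ and the commuting of the two factors, and the second isomorphism theorem for $U_i\cong V_i$. Your computation of $U_1\cap U_2$ is also correct; the key observation $A_1N=A_2N$ is properly justified by the identity $(\mu_1(a),1)=(1,\mu_2(a))(\mu_1(a),\mu_2(a^{-1}))$. So there is nothing to compare against in the paper, but what you wrote stands on its own as the standard textbook argument.
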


The external central product $D/N$ as constructed above is isomorphic to the internal central product $G = U_1U_2$ where $U_i \cong V_i$ for $i=1,2$.  We now show that $\phi: D/N\ra G$ given by $\phi((a,b)N)=ab$ is well defined but leave it to the reader to prove $\phi$ an isomorphism. 

\textit{Well defined:}  Let $(u_1,u_2)N=(v_1,v_2)N.$ We wish to show that $\phi((u_1,u_2)N)=\phi((v_1,v_2)N).$  By assumption, $(v_1,v_2)^{-1}(u_1,u_2)N=N$.  Hence, $(v_1,v_2)^{-1}(u_1,u_2)=(v_1^{-1}u_1,v_2^{-1}u_2)\in N$.  Because the identity maps to the identity,
$$
\phi((v_1^{-1}u_1,v_2^{-1}u_2)N)=v_1^{-1}u_1v_2^{-1}u_2=1$$
Now, $$
\phi((u_1,u_2)N)=u_1u_2=(v_1v_1^{-1})u_1(v_2v_2^{-1})u_2
$$
However, elements of $U_2$ commute with elements of $U_1$, which yields
$$
v_1v_2(v_1^{-1}u_1v_2^{-1}u_2)=v_1v_2=\phi((v_1,v_2)N).
$$
Therefore, $\phi$ is well defined. 

Internal central products are unique because $G = U_1U_2$ is determined by the choice of subgroups $U_1$ and $U_2$ of $G$. However, external central products are not unique because the construction depends on the choice of monomorphisms $\mu_1$ and $\mu_2$.

Let us develop an intuition for the construction of an external central product via an example that we will return to in later sections.

\begin{example}\label{d8c4}
Consider the dihedral group of order eight, $D_8$, and the cyclic group of order four, $C_4$, with the following respective group presentations:
\begin{align*}
&D_8=\langle r,s| \hspace{.1cm} r^4=s^2=1, rsr=s\rangle\\
&C_4=\langle y|\hspace{.1cm}  y^4=1\rangle
\end{align*}
We use \ref{externalcp} to construct $D_8 \circ C_4$, a central product of $D_8$ and $C_4$, which is a group of order 16.

Let $U_1=D_8, U_2=C_4$. Fix $A=Z(D_8)\cong C_2$. Then we define the monomorphisms
\begin{align*}
\mu_1: Z(D_8)\ra Z(D_8)\\
\mu_2: Z(D_8)\ra Z(C_4)
\end{align*}
by $\mu_1$ as the identity map and $\mu_2(r^2)=y^2$. Then $N=\{(\mu_1(a),\mu_2(a^{-1}))|\hspace{.1cm} a\in A\}=\{(1,1),(r^2,y^2)\}$, and
$$
D_8\circ C_4=\frac{D_8\times C_4}{N}.
$$

Without loss of generality, suppose $\overline{r} = (r,1)N$, $\overline{s} = (s,1)N$, and $\overline{y} = (1,y)N$. Then

$$
D_8\circ C_4=\langle \overline{r},\overline{s},\overline{y}| \hspace{.1cm} \overline{r}^4=\overline{s}^2=N, \hspace{.1cm} \overline{r}\overline{s}\overline{r}=\overline{s}, \hspace{.1cm} \overline{r}^2=\overline{y}^2,\hspace{.1cm} \overline{r}\overline{y}=\overline{y}\overline{r}, \hspace{.1cm} \overline{s}\overline{y}=\overline{y}\overline{s}\rangle.
$$

Note that because of the isomorphism between the internal and external presentation of a central product, we know that we may also write this group as

$$
D_8C_4=\langle r,s,y,| \hspace{.1cm} r^4=s^2=1, \hspace{.1cm} rsr=s,\hspace{.1cm} r^2=y^2, \hspace{.1cm} ry=yr, \hspace{.1cm} sy=ys\rangle.
$$

\end{example}

\begin{remark} All results characterizing subgroups of central products in the remaining sections will be stated for internal central products. It is important to note that all results will apply to external central products due to the isomorphism between internal and external central products.

\end{remark}

\section{Normal Subgroups of Central Products}

The goal of this section is to characterize normal subgroups of a central product of two groups. \ref{cpepi} allows us to define the central products of groups $U_1$ and $U_2$ by an epimorphism from $D = U_1 \times U_2$ onto $G = U_1U_2$ with $\eps(\overline{U}_i) = U_i$ for $i = 1,2$. We begin with a lemma from \cite{brewster07} that characterizes normal subgroups of a direct product and serves as the inspiration for our characterization.

\begin{lemma}\label{dpnormal}
Let $U_1, U_2$ be groups. Then $N\unlhd U_1\times U_2$ if and only if $[N,\overline{U_i}]\leq N\cap\overline{U_i}$ where $i = 1,2$.
\end{lemma}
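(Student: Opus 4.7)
The statement is a biconditional, so the plan is to prove both implications using Lemma \ref{commproperties}, especially part (ii), which converts containment of commutators into normalization conditions. Neither direction should require heavy calculation; the work is mostly keeping track of which inclusions follow from normality in $U_1 \times U_2$.

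For the forward direction, I would assume $N \unlhd U_1 \times U_2$ and fix $i \in \{1,2\}$. Since $\overline{U_i} \leq U_1 \times U_2$ and $N$ is normal, Lemma \ref{commproperties}(ii) applied to the inclusion $\overline{U_i} \leq U_1 \times U_2 = N_{U_1\times U_2}(N)$ gives $[N, \overline{U_i}] \leq N$. Symmetrically, since $\overline{U_i} \unlhd U_1 \times U_2$ and $N \leq U_1 \times U_2$, the same lemma gives $[N, \overline{U_i}] = [\overline{U_i}, N] \leq \overline{U_i}$. Intersecting the two inclusions yields $[N, \overline{U_i}] \leq N \cap \overline{U_i}$, as required.

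For the backward direction, I would assume $[N, \overline{U_i}] \leq N \cap \overline{U_i}$ for $i = 1,2$. In particular $[N, \overline{U_i}] \leq N$, so Lemma \ref{commproperties}(ii) (in the other direction) implies $\overline{U_i} \leq N_{U_1\times U_2}(N)$ for both $i$. Since the normalizer is a subgroup, it contains $\langle \overline{U_1}, \overline{U_2}\rangle = \overline{U_1}\,\overline{U_2} = U_1 \times U_2$, and therefore $N \unlhd U_1 \times U_2$.

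There is no real obstacle here; the only subtlety is noticing that the backward direction only uses the weaker conclusion $[N,\overline{U_i}] \leq N$, while the stronger hypothesis $[N,\overline{U_i}] \leq N \cap \overline{U_i}$ is exactly what makes the biconditional tight and is forced in the forward direction by the normality of $\overline{U_i}$ in $U_1 \times U_2$. Both directions reduce to a double application of Lemma \ref{commproperties}(ii).
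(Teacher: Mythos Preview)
Your proof is correct. The paper does not actually prove Lemma~\ref{dpnormal}; it simply cites it from \cite{brewster07}, so there is no in-paper argument to compare against directly.

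That said, the paper does prove the central-product analogue (Proposition~\ref{cpcom}), and its forward direction is identical to yours: use $U_i\unlhd G$ and $H\unlhd G$ together with Lemma~\ref{commproperties}(ii) to land the commutator in the intersection. For the backward direction, however, the paper opts for an explicit element-wise computation---writing $g=u_1u_2$ and showing $h^g\in H$ by peeling off commutators one factor at a time---rather than your cleaner normalizer argument $\overline{U_i}\leq N_D(N)$ and $\langle\overline{U_1},\overline{U_2}\rangle=D$. Your approach is shorter and makes transparent, as you observe, that only the weaker inclusion $[N,\overline{U_i}]\leq N$ is needed for the converse.
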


In \ref{cpepi}, an epimorphism between direct products and central products was established to give an alternate and more useful way of viewing central products. Given an epimorphism, we know by \ref{correspondence} that there is a one-to-one correspondence between normal subgroups of a domain which contain the kernel and normal subgroups of the codomain. This serves as a motivation for the following result.

\begin{proposition}\label{cpnormal}
Let $G = U_1U_2$ be the central product of subgroups $U_1$ and $U_2$ defined by the epimorphism $\eps: D\ra G$, where $D=U_1\times U_2$. Let $H\leq G$. Then $H\unlhd G$ if and only if $\eps^{-1}(H)\unlhd D$.
\end{proposition}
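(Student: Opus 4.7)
The plan is to recognize this proposition as essentially an immediate instance of the Correspondence Theorem (Lemma \ref{correspondence}) applied to the epimorphism $\eps: D \to G$ furnished by Lemma \ref{cpepi}. The key observation is that for any subgroup $H \leq G$, its preimage $\eps^{-1}(H)$ automatically contains $\ker\eps$, so $\eps^{-1}(H)$ lies in the set $S_\eps(D)$ of subgroups of $D$ containing $\ker\eps$. Moreover, surjectivity of $\eps$ gives $\eps(\eps^{-1}(H)) = H$, so that $H$ and $\eps^{-1}(H)$ correspond to each other under the bijection $K \mapsto \eps(K)$. Lemma \ref{correspondence} then states precisely that $H \unlhd G$ if and only if $\eps^{-1}(H) \unlhd D$.

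To keep the argument self-contained, I would still verify each direction directly rather than quote the correspondence as a black box. The forward direction is the standard fact that preimages of normal subgroups under homomorphisms are normal: if $H \unlhd G$, then for any $d \in D$ and $k \in \eps^{-1}(H)$ the element $\eps(d^{-1}kd) = \eps(d)^{-1}\eps(k)\eps(d)$ lies in $H$ by normality, so $d^{-1}kd \in \eps^{-1}(H)$. The reverse direction uses surjectivity: given $g \in G$ and $h \in H$, choose $d, k \in D$ with $\eps(d) = g$ and $\eps(k) = h$, so that $k \in \eps^{-1}(H)$. Normality of $\eps^{-1}(H)$ in $D$ then yields $d^{-1}kd \in \eps^{-1}(H)$, and applying $\eps$ gives $g^{-1}hg = \eps(d^{-1}kd) \in H$.

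There is no genuine obstacle in this proof; the content is essentially bookkeeping around Lemma \ref{correspondence}. The substantive payoff, however, is what this characterization unlocks: combined with Lemma \ref{dpnormal}, which describes normality in $D = U_1 \times U_2$ via the concrete condition $[N, \overline{U}_i] \leq N \cap \overline{U}_i$, Proposition \ref{cpnormal} transports that commutator-based criterion from the direct product $D$ to the central product $G$, and that translation is the real goal of this section.
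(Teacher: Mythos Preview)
Your proposal is correct and matches the paper's approach exactly: the paper also proves this by invoking Lemma~\ref{correspondence} for the epimorphism $\eps$ of Lemma~\ref{cpepi}, noting that $\ker\eps \leq \eps^{-1}(H)$ so that normality of $H$ and of $\eps^{-1}(H)$ correspond. Your additional direct verification of each direction is fine extra detail but not needed beyond what the paper does.
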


\begin{proof}
By \ref{cpepi}, an epimorphism $\eps: D\ra G$ exists. Since there is a one-to-one correspondence between the set $S_\eps(D)=\{K\leq D| \hspace{.1cm} \ker\eps\leq K\}$ and $S(G)$, the set of all subgroups of $G$, we know for any normal subgroup of $D$ containing $\ker\eps$, its image is a normal subgroup of $G$. That is,
$$
H\unlhd G \iff \ker\eps\leq\eps^{-1}(H)\unlhd D
$$
as desired.
\end{proof}

The following diagram demonstrates the relationship established in \ref{cpnormal} between normal subgroups of $U_1U_2$ and the normal subgroups of $U_1\times U_2$ containing the $\ker\eps$:



\begin{figure}[h]
$$\xymatrix{
{U_1 \times U_2} \ar@{-}[d]^{\trianglelefteq} \ar[r]^{\varepsilon} & {U_1U_2} \ar@{-}[d]^{\trianglelefteq} \\  
{\varepsilon^{-1}(H)} \ar@{-}[d]   & {H}\\
{\ker\varepsilon}
}
$$
\caption{Diagram for \ref{cpnormal}}
\end{figure}


To further characterize normal subgroups of the central product $U_1U_2$, we generalize \ref{dpnormal}. This generalization provides an efficient way to find normal subgroups of $U_1U_2$ and can be easily implemented in computer algebra systems such as GAP or Sage. 

\begin{proposition}\label{cpcom}
Let $G = U_1U_2$ be the central product of subgroups $U_1$ and $U_2$. Then $H\unlhd G$ if and only if $[U_i, H]\leq U_i\cap H$ where $i = 1,2$.
\end{proposition}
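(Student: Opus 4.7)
The plan is to prove both implications by applying Lemma \ref{commproperties}, and in particular part (ii), which states $[A,B]\leq A$ if and only if $B\leq N_G(A)$. The key background fact I will use without further comment is that in the definition of an internal central product (Definition \ref{cpdef}), each $U_i$ is normal in $G=U_1U_2$, as the paper already notes.

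For the forward direction, I will assume $H\unlhd G$ and apply Lemma \ref{commproperties}(ii) twice. Once with $A=U_i$ and $B=H$: since $H\leq G=N_G(U_i)$, we get $[U_i,H]\leq U_i$. Once more with $A=H$ and $B=U_i$: since $U_i\leq G=N_G(H)$, we get $[H,U_i]\leq H$. Combining these two containments with the symmetry $[U_i,H]=[H,U_i]$ from Lemma \ref{commproperties}(i) yields $[U_i,H]\leq U_i\cap H$ for $i=1,2$.

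For the reverse direction, assume $[U_i,H]\leq U_i\cap H$ for $i=1,2$. In particular $[U_i,H]\leq H$, and using the symmetry of the commutator, $[H,U_i]\leq H$. Applying Lemma \ref{commproperties}(ii) with $A=H$ and $B=U_i$ converts this into $U_i\leq N_G(H)$ for each $i$. Since $G=U_1U_2=\langle U_1,U_2\rangle$, every element of $G$ already lies in $N_G(H)$, so $G=N_G(H)$ and $H\unlhd G$.

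The proof is not hard once the bookkeeping is handled; the main (mild) obstacle is keeping track of which slot of $[\cdot,\cdot]$ corresponds to the $A$ of Lemma \ref{commproperties}(ii), and remembering to invoke the symmetry $[A,B]=[B,A]$ to align the hypothesis with the statement of that lemma. The substantive content of the proof is really just that $U_1$ and $U_2$ are normal in $G$ and together generate $G$, so the normalizer condition $U_i\leq N_G(H)$ for $i=1,2$ already forces $H\unlhd G$.
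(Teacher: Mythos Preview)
Your proof is correct. The forward direction is essentially identical to the paper's: both invoke Lemma~\ref{commproperties}(ii) together with the normality of $U_i$ in $G$ and of $H$ in $G$ to obtain $[U_i,H]\leq U_i$ and $[U_i,H]\leq H$ separately.

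For the reverse direction, however, you take a genuinely different and somewhat cleaner route. The paper argues by an explicit element computation: given $h\in H$ and $g=u_1u_2\in G$, it expands $h^g=(h^{u_1})^{u_2}$, inserts commutators $[h,u_1]\in H\cap U_1$ and $[h_2,u_2]\in H\cap U_2$ step by step, and concludes $h^g\in H$. You instead stay at the level of subgroups: from $[H,U_i]\leq H$ and Lemma~\ref{commproperties}(ii) you deduce $U_i\leq N_G(H)$ for $i=1,2$, and since $G=U_1U_2=\langle U_1,U_2\rangle$, this forces $N_G(H)=G$. Your argument is shorter and more conceptual, and it makes transparent that the only facts about the central product being used are $G=\langle U_1,U_2\rangle$ and $U_i\unlhd G$; the paper's element-chasing verifies the same thing concretely but obscures this structural point. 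Either argument is fine, and yours would generalize verbatim to any group generated by a family of normal subgroups.
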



\begin{proof}
If $H\unlhd G$, then $N_G(H)=G$ and $U_i\leq N_G(H)$.  So $[U_i, H]\leq H$ by \ref{commproperties}. Similarly, because $U_i\unlhd G$, we know $[U_i, H]\leq U_i$.  Therefore, $[U_i, H]\leq U_i\cap H$ for $i=1,2$ as desired.

Assume that $[U_i, H]\leq U_i\cap H$ for $i=1,2$. Let $h\in H$ and $g\in G$, and consider $h^g$. Because $G$ is an internal central product, we can write $g=u_1u_2$ for some $u_1\in U_1$ and $u_2\in U_2$. Therefore,

$$
h^g=h^{u_1u_2}=(u_1u_2)^{-1}h(u_1u_2)=u_2^{-1}u_1^{-1}hu_1u_2=u_2^{-1}(hh^{-1})u_1^{-1}hu_1u_2.
$$
Note that $h^{-1}u_1^{-1}hu_1\in H\cap U_1$ by assumption.  Let  $h_1= h^{-1}h^{u_1}$ and $h_2 = hh_1$.  Then $$h^g = h_2^{u_2} = h_2h_2^{-1}h_2^{u_2}.$$  Since $h_2^{-1}h_2^{u_2} \in U_2 \cap H$, it follows that $h^g \in H$ and $H\unlhd G$.

\end{proof}

In the following diagram we give a visual representation of \ref{cpcom}.


\begin{figure}[h]
\begin{center}
$$\xymatrix{
 & {U_1U_2}\ar@{-}[dl] \ar@{-}[dr]^{\trianglelefteq}\\
{U_i} \ar@{-}[dr] & & {H} \ar@{-}[dl] \\
&{U_i \cap H} \ar@{-}[d] \\
&{[U_i,H]} 
}$$
\caption{Diagram for \ref{cpcom}. Observe that we must have $H\lhd U_1U_2$ for accuracy; otherwise, the commutator subgroup $[U_i, H]$ would not be a subgroup of $U_i\cap H$ for $i=1,2$. Lines without arrowheads indicate subgroup containment with smaller groups below the groups in which they are contained.}
\end{center}
\end{figure}

The following theorem provides a summary of our results shown in this section.

\begin{thm}\label{tfaenormal}
Let $G = U_1U_2$ be the central product of subgroups $U_1$ and $U_2$ defined by the epimorphism $\eps: D \rightarrow G$ where $D = U_1 \times U_2$. Let $H \leq G$. Then the following are equivalent:
\begin{enumerate}
\item $H\unlhd G$.
\item $\eps^{-1}(H)\unlhd D$.
\item $[\overline{U}_i, \eps^{-1}(H)]\leq \overline{U}_i\cap \eps^{-1}(H)$ for $i=1,2$.
\item $[U_i, H]\leq H\cap U_i$ for $i=1,2$.
\end{enumerate}
\end{thm}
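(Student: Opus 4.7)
The plan is to recognize that this theorem is a consolidation of the results already established earlier in the section, so the proof reduces to citing the appropriate propositions and supplying one short observation to close the web of equivalences. Concretely, Proposition \ref{cpnormal} provides (i) $\iff$ (ii), and Proposition \ref{cpcom} provides (i) $\iff$ (iv). To complete the picture, it remains to establish (ii) $\iff$ (iii).

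For this last equivalence, I would apply Lemma \ref{dpnormal} to the external direct product $D = U_1 \times U_2$, taking the normal-candidate subgroup to be $\eps^{-1}(H)$. Since $H \leq G$, the preimage $\eps^{-1}(H)$ is automatically a subgroup of $D$, so Lemma \ref{dpnormal} applies verbatim and yields $\eps^{-1}(H) \unlhd D$ if and only if $[\overline{U}_i, \eps^{-1}(H)] \leq \overline{U}_i \cap \eps^{-1}(H)$ for $i=1,2$. Combining these three bi-implications gives the pairwise equivalence of (i), (ii), (iii), and (iv).

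The main obstacle here is not a computational one but a matter of careful bookkeeping with the correspondence theorem. In particular, I would want to confirm that the hypothesis $\ker\eps \leq \eps^{-1}(H)$ needed to invoke Lemma \ref{correspondence} (inside the proof of Proposition \ref{cpnormal}) holds, which it does trivially since the identity lies in $H$. I would also verify that the images $\eps(\overline{U}_i) = U_i$ from Lemma \ref{cpepi} mesh properly with the bar-notation used in (iii), so that (iii) is genuinely a statement about $D$ and its canonical subgroups rather than about $G$. No new computation is required beyond that already appearing in the proofs of Propositions \ref{cpnormal} and \ref{cpcom}; the theorem is essentially an organizing statement tying together the epimorphism viewpoint, the direct-product commutator criterion, and the intrinsic central-product commutator criterion.
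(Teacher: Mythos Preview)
Your proposal is correct and matches the paper's approach exactly: the paper also treats Theorem \ref{tfaenormal} as a consolidation, with (i)$\iff$(ii) from Proposition \ref{cpnormal}, (i)$\iff$(iv) from Proposition \ref{cpcom}, and simply notes that (ii)$\iff$(iii) is a direct application of Lemma \ref{dpnormal}.
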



Note that $(ii) \iff (iii)$ is a direct application of \ref{dpnormal}.

\begin{example}
To illustrate the usefulness of \ref{tfaenormal}, consider $D_8C_4$ from \ref{d8c4}. In this example, we determined its subgroups, their respective orders, their isomorphism types, and the normal subgroups using our results.  This information is provided in the table below.

\begin{center}
\begin{tabular}{|l|c|l|c|l|}
\hline
Isomorphism Type & $\#$ & Subgroups& $\#$ Normal & Normal Subgroups\\ \hline
Trivial group & 1 & $\{1\}$ & 1 & All\\
$C_2$ & 7 & $\langle r^2\rangle, \langle s\rangle, \langle rs\rangle, \langle r^2s\rangle, \langle r^3s\rangle, \langle ry\rangle, \langle r^3y\rangle$ & 1 & $\langle r^2\rangle$\\
$C_4$ & 4 & $\langle y\rangle, \langle r\rangle, \langle sy\rangle, \langle rsy\rangle$ & 4 & All\\
$V_4$ & 4 & $\langle r^2,s\rangle, \langle r^2,rs\rangle, \langle r^2,sy\rangle$ & 4 & All\\
$Q_8$ & 1 & $\langle r, sy\rangle$ & 1 & All\\
$C_4\times C_2$ & 3 & $\langle r,y\rangle, \langle s,y\rangle, \langle rs,y\rangle$ & 3 & All\\
$D_8$ & 3 & $\langle r,s\rangle, \langle ry,sy\rangle, \langle rsy,ry\rangle$ & 3 & All\\
$D_8C_4$ & 1 & $D_8C_4$ & 1 & All\\
\hline
\hline
Total & 23 & & 17 &\\ \hline
\end{tabular}
\end{center}


We provide a sample calculation using \ref{tfaenormal}. Consider the subgroup $\langle rsy\rangle \leq D_8C_4$.  To show this subgroup is normal, it suffices to verify that $[\langle rsy\rangle,D_8]\leq \langle rsy\rangle\cap D_8$ and $[\langle rsy\rangle, C_4]\leq \langle rsy\rangle \cap C_4$. It is sufficient to check this using the generators of each group:

\begin{align*}
&[rsy, y]=1\leq \langle rsy\rangle \cap C_4\\
&[rsy,s]=r^3sysrsys=y^2=r^2\\
&[rsy,r]=r^3syr^3rsyr=r^2\\
&[\langle rsy\rangle,D_8]=\langle r^2\rangle\leq\langle rsy\rangle\cap D_8
\end{align*}

Hence, $\langle rsy\rangle \unlhd D_8C_4$.
Our result is also helpful to identify subgroups that are not normal.  For example, to see the subgroup $\langle s\rangle\leq D_8C_4$ is not normal, we verify that $[\langle s\rangle,D_8]\nleq\langle s\rangle\cap D_8$.
$$
[s,r]=sr^3sr=r^2\notin \langle s\rangle.
$$
So, $\langle s\rangle$ is not a normal subgroup of $D_8C_4$.

\end{example}

\section{Subnormal Subgroups of Central Products}

Our next goal is to characterize subnormal subgroups of central products. We use the following definition of subnormal from \cite{dh92} with adjusted notation.
\begin{definition}
Let $H\leq G$. We call $H$ \textbf{subnormal} in $G$, written $H\sn G$, if there exists a chain of subgroups $H_0,H_1,\ldots,H_r$ such that
$$
H=H_0\unlhd H_1\unlhd \ldots \unlhd H_{r-1}\unlhd H_r=G
$$
This is called a \textbf{subnormal chain} from $H$ to $G$. If such a chain is the minimal possible chain from $H$ to $G$, we say that the subnormal subgroup is of \textbf{defect} $r$.
\end{definition}

Subnormal subgroups were completely characterized in direct products by Hauck in \cite{hauck}.  In the following theorems, we adopt, with some modifications, the notation for this classification as it is presented in \cite{brewster07}.

\begin{lemma}\label{dpsubnormal}
A subgroup $K$ of $U_1\times U_2$ is subnormal in $U_1\times U_2$ (of defect $r$) if and only if $$[U_i,\pi_i(K),\overset{r}{\ldots},\pi_i(K)]\leq K\cap U_i$$ for $i=1,2$.
\end{lemma}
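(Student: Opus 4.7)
The plan is to reduce the question to a classical characterization of subnormality by iterated commutators, and then exploit the direct-product structure of $U_1\times U_2$. First I would establish the standard fact that, for any subgroup $K$ of a group $H$, $K \sn H$ of defect at most $r$ if and only if $[H, K, \overset{r}{\ldots}, K] \leq K$. The forward direction is a quick induction on $r$ using a subnormal chain $K = K_0 \unlhd K_1 \unlhd \cdots \unlhd K_r = H$, together with the observation that $[K_j, K] \leq [K_j, K_{j-1}] \leq K_{j-1}$ since $K_{j-1} \unlhd K_j$. For the backward direction, I would exhibit the chain explicitly by setting $K_i = K \cdot [H, K, \overset{r-i}{\ldots}, K]$ and verify $K_{i-1} \unlhd K_i$ using the commutator identities of \ref{commproperties}.

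Applying this characterization with $H = U_1\times U_2$, the task reduces to computing the iterated commutator $[U_1\times U_2, K, \overset{r}{\ldots}, K]$. The key observation is the coordinatewise identity $[(a,b),(c,d)] = ([a,c],[b,d])$ in the direct product. Combined with the fact that $\overline{U}_1$ and $\overline{U}_2$ commute and intersect trivially, a straightforward induction on $r$ should yield
\begin{equation*}
[U_1\times U_2, K, \overset{r}{\ldots}, K] \;=\; \overline{[U_1, \pi_1(K), \overset{r}{\ldots}, \pi_1(K)]} \cdot \overline{[U_2, \pi_2(K), \overset{r}{\ldots}, \pi_2(K)]}.
\end{equation*}
Since the two factors on the right lie inside $\overline{U}_1$ and $\overline{U}_2$ respectively, their product is contained in $K$ if and only if each factor is individually contained in $K \cap \overline{U}_i$, which under the identification $\overline{U}_i \cong U_i$ gives the stated condition for $i=1,2$. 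The minimality of $r$ (i.e.\ the defect) transfers from one side to the other through the same equivalence, since the integer $r$ is the smallest value for which either condition first holds.

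The main obstacle is the backward direction of the first step: the explicit chain $K_i = K\cdot [H, K, \overset{r-i}{\ldots}, K]$ must be checked to consist of normal inclusions, which requires careful use of commutator identities such as $[XY, Z] \leq [X,Z]^Y\,[Y,Z]$ together with normal closure considerations. Once that standard characterization of subnormal defect is in place, the direct-product computation in the second step is essentially bookkeeping via the coordinatewise commutator identity, and the final equivalence is immediate from $\overline{U}_1 \cap \overline{U}_2 = 1$.
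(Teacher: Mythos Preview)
Your proposal is correct. The paper does not actually prove this lemma; it is stated as a known result and attributed to Hauck \cite{hauck} (with notation adapted from \cite{brewster07}), so there is no ``paper's own proof'' to compare against. What you outline is in fact the standard route to Hauck's characterization: the classical equivalence $K\sn H$ of defect $\leq r$ $\Longleftrightarrow$ $[H,K,\overset{r}{\ldots},K]\leq K$, followed by the coordinatewise computation
\[
[U_1\times U_2,\,K,\overset{r}{\ldots},K]
= [U_1,\pi_1(K),\overset{r}{\ldots},\pi_1(K)]\times
  [U_2,\pi_2(K),\overset{r}{\ldots},\pi_2(K)],
\]
which indeed follows by induction from $[(a,b),(c,d)]=([a,c],[b,d])$ together with the freedom to set either coordinate equal to $1$ in $U_1\times U_2$. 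Your remark that the exact defect is recovered as the least $r$ for which both coordinate conditions hold is also correct. The only point where you should be explicit is the backward direction of the first step: verifying that $K_i=K\cdot[H,K,\overset{r-i}{\ldots},K]$ gives a genuine subnormal chain requires checking that each $K_i$ is a subgroup and that $K_{i-1}\unlhd K_i$; this uses that $[H,K,\overset{j}{\ldots},K]$ is normalized by $K$ (and by $[H,K,\overset{j-1}{\ldots},K]$) via \ref{commproperties}(i), and is routine once stated.
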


In order to classify subnormal subgroups of central products, we seek a correspondence theorem similar to \ref{correspondence} in order to establish a relationship between subnormal subgroups of the direct product of two groups and subnormal subgroups of a central product of two groups. We do so using the following proposition.

\begin{proposition}\label{sncorrespondence}
Let $f:G\ra H$ be an epimorphism. Let $S_f(G)=\{K\leq G| \hspace{.1cm} \ker f\leq K$\}, and let $S(H)$ be the set of all subgroups of $H$. Then there is a one-to-one correspondence $K\mapsto f(K)$ between $S_f(G)$ and $S(H)$, where subnormal subgroups correspond to subnormal subgroups.
\end{proposition}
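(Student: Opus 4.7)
The plan is to leverage Lemma \ref{correspondence}, which already gives the bijection $K \mapsto f(K)$ between $S_f(G)$ and $S(H)$ and already preserves normality. The only new content is that subnormality on one side corresponds to subnormality on the other, and this reduces to applying the normal-subgroup correspondence one link at a time along a subnormal chain. I would prove both directions separately, using the same strategy: start with a subnormal chain on one side, apply Lemma \ref{correspondence} edge-by-edge to a suitably restricted epimorphism, and concatenate the resulting normal inclusions into a subnormal chain on the other side.

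For the forward direction, let $K \in S_f(G)$ with $K \sn G$ via a chain $K = K_0 \unlhd K_1 \unlhd \cdots \unlhd K_r = G$. Since $\ker f \leq K \leq K_{i-1} \leq K_i$ for each $i$, the restriction $f|_{K_i}: K_i \ra f(K_i)$ is an epimorphism whose kernel is $\ker f$ and is contained in $K_{i-1}$. Applying Lemma \ref{correspondence} to this restriction transports the normality $K_{i-1} \unlhd K_i$ to the normality $f(K_{i-1}) \unlhd f(K_i)$. Concatenating these edges yields $f(K) = f(K_0) \unlhd f(K_1) \unlhd \cdots \unlhd f(K_r) = H$, so $f(K) \sn H$.

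For the backward direction, suppose $L \sn H$ with chain $L = L_0 \unlhd L_1 \unlhd \cdots \unlhd L_r = H$. Each preimage $f^{-1}(L_i)$ automatically contains $\ker f$, so the restriction $f|_{f^{-1}(L_i)}: f^{-1}(L_i) \ra L_i$ is a surjection with kernel lying in $f^{-1}(L_{i-1})$. Lemma \ref{correspondence} then promotes $L_{i-1} \unlhd L_i$ to $f^{-1}(L_{i-1}) \unlhd f^{-1}(L_i)$, producing the chain $f^{-1}(L) = f^{-1}(L_0) \unlhd \cdots \unlhd f^{-1}(L_r) = G$, so $f^{-1}(L) \sn G$. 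Combining the two directions with $f(f^{-1}(L)) = L$ (from surjectivity) and $f^{-1}(f(K)) = K$ (from $\ker f \leq K$) confirms that the assignments $K \mapsto f(K)$ and $L \mapsto f^{-1}(L)$ are mutually inverse and both transport subnormality.

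I do not anticipate a significant obstacle; the result is essentially a routine extension of Lemma \ref{correspondence}. The only care required is to verify at each step that the restricted epimorphism has its kernel inside the smaller subgroup so that Lemma \ref{correspondence} applies, and this is immediate from the containments noted above. As a small bonus, the same argument shows that defect is preserved, since any shorter subnormal chain for $f(K)$ in $H$ would pull back through $f^{-1}$ to a shorter chain for $K$ in $G$, contradicting minimality.
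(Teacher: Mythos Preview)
Your proposal is correct and follows essentially the same approach as the paper: both reduce the question to checking that a single normality $V\unlhd U$ (with $\ker f\leq V$) transports across the restricted epimorphism $f|_U:U\to f(U)$ via Lemma~\ref{correspondence}, and then apply this link-by-link along a subnormal chain. Your write-up is a bit more explicit about the two directions and even notes defect preservation, but the underlying argument is the same.
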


\begin{proof}
To show that subnormal subgroups correspond to subnormal subgroups under the bijection between $S_f(G)$ and $S(H)$, it suffices to show that for $U,V\in S_f(G)$, $V\unlhd U$ if and only if $f(V)\unlhd f(U)$.
To prove this, we construct an epimorphism $\phi: U\ra f(U)$ given by $\phi(u)=f(u)$ for all $u\in U$.

\textit{$\phi$ is a homomorphism:} For any $u,v\in U$, $\phi(uv)=f(uv)=f(u)f(v)=\phi(u)\phi(v)$ because $f$ is a homomorphism.

\textit{$\phi$ is surjective:} Let $x\in f(U)$. Then $x=f(u)$ for some $u\in U$, but $\phi(u)=f(u)=x$.  So $\phi$ is onto and is therefore an epimorphism.

Now it suffices to show that $\ker\phi=\ker f$. Let $a\in\ker\phi$. Then $\phi(a)=f(a)=1$, and $\ker \phi\subseteq\ker f$. Similarly, because $\ker f\leq U$, we have that if $b\in\ker f$, then $f(b)=\phi(b)=1$.  Hence, $\ker f \subseteq\ker\phi$ and we have equality, as desired.

By \ref{correspondence}, we know that there exists a one-to-one correspondence between subgroups of $U$ containing $\ker\phi=\ker f$ and subgroups $f(U)$ such that normal subgroups correspond to normal subgroups. Then $\ker f\leq V\unlhd U\leq G$ if and only if $f(V)\unlhd f(U)$. 

Applying this result to a chain of normal subgroups will prove the desired theorem. Specifically, for $\ker f\leq A$, we get $A=A_0\unlhd A_1\unlhd\ldots\unlhd A_r=G$ if and only if $f(A_0)\unlhd f(A_1)\unlhd\ldots\unlhd f(A_r)=f(G)=H$.  Therefore, $f(A)\sn H$.
\end{proof}

Utilizing \ref{sncorrespondence} and \ref{cpepi}, we obtain the following characterization of subnormal subgroups.

\begin{proposition}\label{cpsubnormal} Let $G=U_1U_2$ be the central product of subgroups $U_1$ and $U_2$ defined by the epimorphism $\eps: D\ra G$ where $D=U_1\times U_2$. Then $H\sn G$ if and only if $\eps^{-1}(H)\sn D$.
\end{proposition}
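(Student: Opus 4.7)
The plan is to observe that this proposition follows almost immediately by combining Lemma \ref{cpepi} with Lemma \ref{sncorrespondence}; the work has essentially been done in setting up the correspondence theorem for subnormal subgroups, and all that remains is to apply it to the specific epimorphism $\eps$.

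First, I would invoke Lemma \ref{cpepi} to guarantee that an epimorphism $\eps: D \to G$ with $\eps(\overline{U}_i) = U_i$ exists. Next, I would note that for any subgroup $H \leq G$, its full preimage $\eps^{-1}(H)$ automatically contains $\ker\eps$, so $\eps^{-1}(H) \in S_\eps(D)$ in the notation of Lemma \ref{sncorrespondence}. Because $\eps$ is surjective, we also have $\eps(\eps^{-1}(H)) = H$.

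Then I would apply Lemma \ref{sncorrespondence} directly to $\eps$. That lemma gives a one-to-one correspondence $K \mapsto \eps(K)$ between $S_\eps(D)$ and $S(G)$ under which subnormal subgroups correspond to subnormal subgroups. Applying this correspondence to $K = \eps^{-1}(H)$ yields
\[
\eps^{-1}(H) \sn D \iff \eps(\eps^{-1}(H)) = H \sn G,
\]
which is exactly the claimed equivalence.

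There is no real obstacle here, since the substantive content (the preservation of subnormality under the epimorphism correspondence) is packaged inside Lemma \ref{sncorrespondence}, and the existence of $\eps$ is packaged inside Lemma \ref{cpepi}. The only thing to be careful about is verifying that $\eps^{-1}(H)$ lies in the domain of the correspondence (which is automatic from $\ker\eps \leq \eps^{-1}(H)$) and that surjectivity of $\eps$ identifies $\eps(\eps^{-1}(H))$ with $H$; both are routine.
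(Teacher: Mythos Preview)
Your proposal is correct and follows essentially the same approach as the paper's own proof: invoke Lemma~\ref{cpepi} for the epimorphism $\eps$, then apply the subnormal correspondence of Proposition~\ref{sncorrespondence} to conclude. You are slightly more explicit than the paper in checking that $\ker\eps \leq \eps^{-1}(H)$ and that $\eps(\eps^{-1}(H)) = H$, but the argument is the same.
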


\begin{proof}
By \ref{cpepi}, we know that for any central product $G$ there exists an epimorphism $\eps:D\ra G$ such that $\eps(\overline{U}_i)=U_i$. By \ref{sncorrespondence}, we know that $\eps$ gives rise to a one-to-one correspondence between the set of subnormal subgroups of $D$ which contain $\ker\eps$ and the set of subnormal subgroups of $G$. Therefore, $H\sn G$ if and only $\eps^{-1}(H)\sn D$ as desired.
\end{proof}

Naturally, our next goal was to answer the question: ``Could we further characterize subnormal subgroups in a way that would be useful for computations?''  In the following result, we provide a characterization of subnormal subgroups for central products which could be implemented in GAP or Sage.


\begin{proposition}\label{sncommutator}
Let $G=U_1U_2$ be the central product of subgroups $U_1$ and $U_2$. Then $H\sn G$ of defect $r$ if and only if $[U_i,H,\overset{r}{\ldots},H]\leq U_i\cap H.$
\end{proposition}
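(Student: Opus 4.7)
The plan is to proceed by induction on $r$, establishing the equivalence ``$H$ is subnormal in $G$ of defect at most $r$ if and only if $[U_i, H, \overset{r}{\ldots}, H] \leq U_i \cap H$ for $i = 1, 2$''; the ``exactly $r$'' statement then follows by minimality. The base case $r = 1$ is precisely Proposition \ref{cpcom}.

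For the forward direction of the inductive step, I pick a subnormal chain $H = H_0 \lhd H_1 \lhd \cdots \lhd H_r = G$ and set $K = H_1$, so that $H \lhd K$ and $K$ is subnormal in $G$ of defect at most $r-1$. The inductive hypothesis applied to $K$ gives $[U_i, K, \overset{r-1}{\ldots}, K] \leq U_i \cap K$, and since $H \leq K$, replacing each $K$ with $H$ only shrinks the commutator, yielding $[U_i, H, \overset{r-1}{\ldots}, H] \leq U_i \cap K$. Taking one more commutator with $H$: $H \lhd K$ gives $[U_i \cap K, H] \leq [K, H] \leq H$, while $U_i \lhd G$ together with Lemma \ref{commproperties}(ii) gives $[U_i \cap K, H] \leq [U_i, H] \leq U_i$, so that $[U_i, H, \overset{r}{\ldots}, H] \leq U_i \cap H$, as required.

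For the reverse direction, I plan to construct an explicit subnormal series from $G$ down to $H$. Writing $M_i^{(k)} = [U_i, H, \overset{k}{\ldots}, H]$ with the convention $M_i^{(0)} = U_i$, define $H_k = H \cdot M_1^{(k)} \cdot M_2^{(k)}$ for $0 \leq k \leq r$. Then $H_0 = HU_1U_2 = G$, the hypothesis forces $H_r = H$, and the containment $H_{k+1} \leq H_k$ is immediate because $M_i^{(k)}$ is normalized by $H$ (when $k = 0$ because $U_i \lhd G$, and when $k \geq 1$ because $M_i^{(k)} \lhd \langle M_i^{(k-1)}, H\rangle$ by Lemma \ref{commproperties}(i)), which forces $M_i^{(k+1)} = [M_i^{(k)}, H] \leq M_i^{(k)}$.

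The main obstacle is verifying $H_{k+1} \lhd H_k$ at each step. This amounts to showing that every piece in $\{H, M_1^{(k)}, M_2^{(k)}\}$ normalizes $H_{k+1} = H \cdot M_1^{(k+1)} \cdot M_2^{(k+1)}$, which in turn reduces via the identity $[AB,C] = [A,C]^B [B,C]$ to checking each pairwise commutator lands in $H_{k+1}$. The three structural facts that drive this are: $[U_1, U_2] = 1$, killing cross-commutators like $[M_i^{(k)}, M_j^{(k+1)}]$ for $i \neq j$; each $M_i^{(k+1)}$ is normal in $\langle M_i^{(k)}, H\rangle$ by Lemma \ref{commproperties}(i), so is invariant under both $M_i^{(k)}$- and $H$-conjugation; and by definition $[M_i^{(k)}, H] = M_i^{(k+1)} \leq H_{k+1}$. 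Once this normality is verified, the chain $H = H_r \lhd H_{r-1} \lhd \cdots \lhd H_0 = G$ witnesses defect $\leq r$, completing the induction.
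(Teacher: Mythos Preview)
Your proof is correct, but it takes a genuinely different route from the paper's. The paper does not prove Proposition~\ref{sncommutator} directly; instead it is established within the proof of Theorem~\ref{tfaesn} by passing through the direct product $D=U_1\times U_2$ via the epimorphism $\eps$. Concretely, the paper uses Proposition~\ref{cpsubnormal} to identify subnormality of $H$ in $G$ with subnormality of $\eps^{-1}(H)$ in $D$, invokes Hauck's characterization (Lemma~\ref{dpsubnormal}) as a black box to obtain the commutator condition in $D$, and then moves the commutator condition back and forth between $D$ and $G$ by applying $\eps$ and taking preimages.

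Your argument, by contrast, never leaves the central product $G$. You induct on $r$, use Proposition~\ref{cpcom} as the base case, and for the converse construct an explicit subnormal series $H_k = H\,M_1^{(k)}M_2^{(k)}$ with $M_i^{(k)}=[U_i,H,\overset{k}{\ldots},H]$. The normality check $H_{k+1}\lhd H_k$ goes through exactly as you outline, using $[U_1,U_2]=1$, the fact that $M_i^{(k+1)}\lhd\langle M_i^{(k)},H\rangle$, and the definition $[M_i^{(k)},H]=M_i^{(k+1)}$. (That $H_k$ is a subgroup follows since $M_1^{(k)}\leq U_1$ and $M_2^{(k)}\leq U_2$ centralize each other and $H$ normalizes both; you use this implicitly.)

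The trade-off: the paper's approach is shorter once Hauck's theorem is granted, and it fits the paper's overarching strategy of transporting direct-product results through $\eps$. Your approach is self-contained, avoids citing Hauck's result, and produces an explicit subnormal chain inside $G$, which is arguably more informative---in effect you are re-proving the relevant instance of Hauck's theorem directly for central products.
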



\begin{figure}[h]
\begin{center}
$$\xymatrix{
 & {U_1U_2}\ar@{-}[dl] \ar@{-}[dr]^{\sn}\\
{U_i} \ar@{-}[dr] & & {H} \ar@{-}[dl] \\
&{U_i \cap H} \ar@{-}[d] \\
&{[U_i,H,\overset{r}{\ldots},H]} 
}$$
\caption{Observe that we must have $H\sn U_1U_2$ for accuracy; otherwise, the commutator subgroup $[U_i, H,\overset{r}{\ldots},H]$ would not be a subgroup of $U_i\cap H$ for $i=1,2$. Lines without arrowheads indicate subgroup containment with smaller groups below the groups in which they are contained.}
\end{center}
\end{figure}



The following theorem provides a summary of our results shown in this section.

\begin{thm}\label{tfaesn}
Let $G = U_1U_2$ be the central product of subgroups $U_1$ and $U_2$ defined by the epimorphism $\eps: D \rightarrow G$ where $D=U_1\times U_2$. Let $H\leq G$. Then the following are equivalent:
\begin{enumerate}
\item $H\sn G$ of defect less than or equal to $r$.
\item $\eps^{-1}(H)\sn D$ of defect less than or equal to $r$. 
\item $[\overline{U}_i,\eps^{-1}(H),\overset{r}{\ldots},\eps^{-1}(H)]\leq \overline{U}_i\cap \eps^{-1}(H)$.
\item $[U_i,H,\overset{r}{\ldots},H]\leq U_i\cap H$.
\end{enumerate}
\end{thm}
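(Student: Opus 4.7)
The plan is to close the four-way equivalence by establishing $(\text{i})\iff(\text{ii})$, $(\text{ii})\iff(\text{iii})$, and $(\text{i})\iff(\text{iv})$, invoking the preceding results essentially as black boxes. The first of these is precisely Proposition \ref{cpsubnormal}, obtained earlier from the correspondence \ref{sncorrespondence}, and the third is Proposition \ref{sncommutator}. Neither requires new work.

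For the middle equivalence $(\text{ii})\iff(\text{iii})$ I would apply Hauck's classification, Lemma \ref{dpsubnormal}, to the subgroup $K := \eps^{-1}(H)$ of $D$. Hauck's condition features $\pi_i(K)$ while $(\text{iii})$ features $K$ itself, so a small bridging observation is needed: because $\overline{U}_1$ and $\overline{U}_2$ commute inside $D$, the commutator $[\overline{u}_i, k]$ for $\overline{u}_i \in \overline{U}_i$ and $k = (k_1,k_2)\in K$ depends only on $\pi_i(k)$, since the ``other'' coordinate of $k$ centralizes $\overline{U}_i$. Consequently $[\overline{U}_i, K] = [\overline{U}_i, \pi_i(K)]$ after identifying $\pi_i(K)$ with its embedding into $\overline{U}_i$, and iterating yields $[\overline{U}_i, K, \overset{r}{\ldots}, K] = [\overline{U}_i, \pi_i(K), \overset{r}{\ldots}, \pi_i(K)]$. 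Hauck's condition then becomes exactly $(\text{iii})$.

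As a sanity check and an independent route, $(\text{iii})\iff(\text{iv})$ can be proved directly using the homomorphism $\eps$. Applying $\eps$ to $(\text{iii})$ and invoking Lemma \ref{commproperties}(iii) gives $[U_i, H, \overset{r}{\ldots}, H] = \eps([\overline{U}_i, \eps^{-1}(H), \overset{r}{\ldots}, \eps^{-1}(H)]) \leq \eps(\overline{U}_i \cap \eps^{-1}(H)) \leq U_i \cap H$, yielding $(\text{iv})$. Conversely, if $(\text{iv})$ holds and $x \in [\overline{U}_i, \eps^{-1}(H), \overset{r}{\ldots}, \eps^{-1}(H)]$, then normality of $\overline{U}_i$ in $D$ forces $x \in \overline{U}_i$, while $\eps(x) \in [U_i, H, \overset{r}{\ldots}, H] \leq H$ forces $x \in \eps^{-1}(H)$, so $x \in \overline{U}_i \cap \eps^{-1}(H)$.

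No single step presents a genuine obstacle; the theorem is essentially a bookkeeping result compiling the earlier propositions. The subtlest point is handling the quantifier ``defect less than or equal to $r$'' uniformly across the four statements: once a length-$r$ iterated commutator is contained in the appropriate intersection the same containment holds for any larger $r$, so the parameter $r$ propagates consistently through all four conditions and the equivalence can be stated with a common bound.
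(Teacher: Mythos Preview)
Your proof is correct and mirrors the paper's approach: both invoke \ref{cpsubnormal} for (i)$\iff$(ii), \ref{dpsubnormal} for (ii)$\iff$(iii), and give essentially the same homomorphism-plus-direct-computation argument for (iii)$\iff$(iv). Your bridging observation that $[\overline{U}_i, K, \overset{r}{\ldots}, K] = [\overline{U}_i, \pi_i(K), \overset{r}{\ldots}, \pi_i(K)]$ in fact supplies a step the paper glosses over when it simply cites \ref{dpsubnormal}, and your additional invocation of \ref{sncommutator} is redundant but harmless (the paper states that proposition without proof and effectively proves it via this theorem).
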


\begin{proof}
For ease of notation set $K = \eps^{-1}(H)$.\\
For $(i)\iff (ii)$ and $(ii)\iff (iii)$ see \ref{cpsubnormal} and \ref{dpsubnormal} respectively.\\
$(iii)\Rightarrow (iv)$: Suppose $[\overline{U}_i,K,\overset{r}{\ldots},K]\leq \overline{U}_i\cap K$. Because $\eps$ is a homomorphism, we know that

$$
[\overline{U}_i,K,\overset{r}{\ldots},K]\leq \overline{U}_i\cap K \hspace{.2cm} \text{    implies    } \hspace{.2cm}  
\eps([\overline{U}_i,K,\overset{r}{\ldots},K]) \leq \eps(\overline{U}_i\cap K).
$$

Additionally, since $\eps$ is onto, $\eps(\overline{U}_i\cap K)\leq \eps(\overline{U}_i)\cap\eps(K)$.
Therefore,
$$
[U_i,H,\overset{r}{\ldots},H]=[\eps(\overline{U}_i),\eps(K),\overset{r}{\ldots},\eps(K)]=\eps[\overline{U}_i, K, \overset{r}{\ldots},K]\leq \eps(\overline{U}_i\cap K) \leq \eps(\overline{U}_i)\cap \eps(K)=U_i\cap H
$$
as desired.

$(iv)\Rightarrow (iii)$: Suppose $[U_i, H, \overset{r}{\ldots},H]\leq U \cap H$. Now $\eps^{-1}([U_i, H, \overset{r}{\ldots},H]) \leq \eps^{-1}(U_i\cap H)$.

Since $\overline{U}_i\leq \eps^{-1}(U_i)$, we know  [$\overline{U}_i,\eps^{-1}(H),\ldots,\eps^{-1}(H)]\leq [\eps^{-1}(U_i),\eps^{-1}(H),\ldots,\eps^{-1}(H)]$ and $\overline{U}_i\cap\eps^{-1}(H)\leq \eps^{-1}(U_i)\cap\eps^{-1}(H)$.

Without loss of generality, consider $\overline{U}_1$. Let $[(u,1),(h_1,h_2)]$ be an arbitrary element of $[\overline{U}_1,\eps^{-1}(H)]$. Then $[(u,1),(h_1,h_2)]=(u,1)^{-1}(h_1,h_2)^{-1}(u,1)(h_1,h_2)=(u^{-1}h_1^{-1}uh_1,h_2^{-1}h_2)=(u^{-1}h_1^{-1}uh_1,1)\leq \overline{U}_1$.

Now if $(d_1,d_2)\in D$ and $(u,1)\in\overline{U}_1$, then $[(u,1),(d_1,d_2)]=(u^{-1}h_1^{-1}uh_1,1)\leq \overline{U}_1$.

By induction, $[\overline{U}_i,\eps^{-1}(H),\ldots,\eps^{-1}(H)]\leq \overline{U}_i$.  Since $[\overline{U}_i,\eps^{-1}(H),\ldots,\eps^{-1}(H)]\leq\eps^{-1}(U_i)\cap\eps^{-1}(H)$, we have

$$
[\overline{U}_i,\eps^{-1}(H),\ldots,\eps^{-1}(H)]\leq \overline{U}_i\cap \eps^{-1}(U_i)
$$






\end{proof}

\begin{remark}
If we defined multi-fold commutators as $[a_1,a_2,\ldots,a_n] = [a_1, [a_2, ... ,a_n]]$ for $a_i$ in a group $G$ instead of $[[a_1, a_2, ... ,a_{n-1}], a_n]$, our result in \ref{sncommutator} would change to $H\leq G$ is subnormal in $G$ of defect $r$ if and only if $[H,\overset{r}{\ldots},H, U_i]\leq U_i\cap H$. Similar for \ref{tfaesn}.
\end{remark}

\begin{example}
To illustrate \ref{tfaesn}, consider $D_8C_4$ from \ref{d8c4}.  Using this example, we found that every subgroup of $D_8C_4$ is subnormal.

Consider the subgroup $\langle s\rangle \leq D_8C_4$. It will be shown that $\langle s\rangle \sn D_8C_4$ with defect 2. Applying \ref{tfaesn}, we verify that $[D_8,\langle s\rangle,\langle s\rangle]\leq D_8\cap \langle s\rangle$ and $[C_4,\langle s\rangle,\langle s\rangle]\leq C_4\cap \langle s\rangle$. It is sufficient to check this using the generators of each group:

\begin{align*}
&[r, s, s] = [[r,s], s] = [r^{-1}s^{-1}rs, s] = [r^3srs,s] = [r^2, s] = r^2sr^2s = 1 \leq D_8\cap\langle s\rangle\\
&[s, s, s] = [[s,s],s] = [1,s] = 1 \leq D_8\cap\langle s\rangle\\
&[y, s, s] = [[y,s], s] = [y^{-1}s^{-1}ys, s] = [y^3sys,s] = [y^4s^2, s] = [1, s] = 1 \leq C_4 \cap \langle s\rangle\\
\end{align*}

Hence, $[D_8,\langle s\rangle,\langle s\rangle]\leq D_8\cap \langle s\rangle$ and $[C_4,\langle s\rangle,\langle s\rangle]\leq C_4\cap \langle s\rangle$ implies $\langle s \rangle \sn D_8C_4$ of defect 2. 

\end{example}

\section{Abnormal Subgroups of Central Products}

Our final goal is to characterize abnormal subgroups of central products of two groups. Abnormal subgroups were originally studied due to their connection to the classification of finite groups because the normalizer of any Sylow subgroup of a group is always abnormal. There are many properties known about abnormal subgroups in finite solvable groups.  Abnormal subgroups and normal subgroups are opposites.  Moreover, a maximal subgroup is abnormal if and only if it is not normal.

To establish properties of abnormal subgroups and how they are viewed in central products, we begin this section by presenting technical lemmas.

\begin{lemma}\label{preimage}
Let $\varepsilon : D \rightarrow G$ be an epimorphism. Let $g = \eps(d)$ and $W\leq G.$  Then $\varepsilon^{-1}(W^g) = (\varepsilon^{-1}(W))^d.$
\end{lemma}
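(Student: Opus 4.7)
The plan is to prove set equality by double containment, using only that $\varepsilon$ is a homomorphism (surjectivity is not actually needed for this identity, only for the overall context). The essential observation is that any group homomorphism commutes with conjugation: $\varepsilon(d^{-1}xd) = \varepsilon(d)^{-1}\varepsilon(x)\varepsilon(d) = g^{-1}\varepsilon(x)g$. Everything else is unpacking the notation $A^g = g^{-1}Ag$.

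First I would unpack what each side means. An element $x$ lies in $\varepsilon^{-1}(W^g)$ precisely when $\varepsilon(x) \in g^{-1}Wg$, i.e., when $\varepsilon(x) = g^{-1}wg$ for some $w \in W$. An element $x$ lies in $(\varepsilon^{-1}(W))^d = d^{-1}\varepsilon^{-1}(W)d$ precisely when $dxd^{-1} \in \varepsilon^{-1}(W)$, i.e., when $\varepsilon(dxd^{-1}) \in W$.

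For the containment $\varepsilon^{-1}(W^g) \subseteq (\varepsilon^{-1}(W))^d$, take $x$ with $\varepsilon(x) = g^{-1}wg$ for some $w \in W$, and compute $\varepsilon(dxd^{-1}) = g \cdot g^{-1}wg \cdot g^{-1} = w \in W$; hence $dxd^{-1} \in \varepsilon^{-1}(W)$, so $x \in (\varepsilon^{-1}(W))^d$. For the reverse containment, take $x = d^{-1}yd$ with $y \in \varepsilon^{-1}(W)$; then $\varepsilon(x) = g^{-1}\varepsilon(y)g \in g^{-1}Wg = W^g$, so $x \in \varepsilon^{-1}(W^g)$.

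There is no real obstacle here — the lemma is essentially a restatement of the fact that homomorphisms respect conjugation, packaged for convenient later use in Section 6. The one minor point to be careful about is bookkeeping around the direction of conjugation (the paper uses the right-action convention $a^b = b^{-1}ab$), so I would take care to write $W^g = g^{-1}Wg$ explicitly at the outset to avoid sign errors in the computation.
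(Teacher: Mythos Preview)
Your proof is correct and follows essentially the same double-containment strategy as the paper, using that homomorphisms commute with conjugation. One small point worth noting: your argument for the containment $\varepsilon^{-1}(W^g)\subseteq(\varepsilon^{-1}(W))^d$ is actually cleaner than the paper's, which invokes surjectivity to choose a preimage $a\in D$ of $h\in W$ and then argues via the kernel that $x=a^d z^{-1}\in(\varepsilon^{-1}(W))^d$; as you correctly observe, surjectivity is not needed here, since computing $\varepsilon(dxd^{-1})$ directly already lands in $W$.
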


\begin{proof}
Let $x\in \eps^{-1}(W)^d$. Then $x=y^d$ for some $y\in\eps^{-1}(W)$. So, $\eps(y)\in W$ and as a consequence $(\eps(y))^g\in W^g$. Also, $(\eps(y))^g=\eps(y^d)=\eps(x)$
which implies that $\eps(x)\in W^g$.  Therefore, $x\in \eps^{-1}(W^g)$.

Suppose $x\in \eps^{-1}(W^g)$. Then for some $h\in W$, $\eps(x)=h^g=h^{\eps(d)}=(\eps(a))^{\eps(d)}=\eps(a^d)$ for some $a\in D$.
Therefore, $x^{-1}a^d=z$, where $z\in \ker\eps$. That is, $x=a^dz^{-1}\in (\eps^{-1}(W))^d$. 
Therefore, $\eps^{-1}(W^g)=(\eps^{-1}(W))^d$.
\end{proof}

\begin{lemma}\label{joinpreimage}
Let $\varepsilon : D \rightarrow G$ be an epimorphism, $g = \eps(d)$, and $H\leq G.$  Then $\langle\varepsilon^{-1}(H), (\varepsilon^{-1}(H))^d\rangle$ $= \varepsilon^{-1}\langle H, H^g\rangle.$
\end{lemma}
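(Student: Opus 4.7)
The plan is to prove this set-theoretic equality of subgroups by showing the two containments separately, leaning heavily on \ref{preimage} to translate between the two kinds of preimages and on the standing assumption that $\ker\eps\leq\eps^{-1}(H)$.

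For the forward containment $\langle\eps^{-1}(H),(\eps^{-1}(H))^d\rangle \leq \eps^{-1}\langle H,H^g\rangle$, I would observe that $H\leq\langle H,H^g\rangle$ implies $\eps^{-1}(H)\leq \eps^{-1}\langle H,H^g\rangle$, and by \ref{preimage}, $(\eps^{-1}(H))^d = \eps^{-1}(H^g) \leq \eps^{-1}\langle H,H^g\rangle$. Since the preimage of a subgroup is a subgroup, it contains the join of these two subgroups, giving the desired containment.

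For the reverse containment, let $x\in \eps^{-1}\langle H,H^g\rangle$, so $\eps(x)$ is a finite word $\eps(x)=w_1 w_2 \cdots w_n$ where each $w_i$ lies in $H$ or in $H^g$. For each factor, choose a lift: if $w_i\in H$ pick $a_i\in \eps^{-1}(H)$ with $\eps(a_i)=w_i$, and if $w_i\in H^g$ pick $a_i\in \eps^{-1}(H^g)=(\eps^{-1}(H))^d$ (again using \ref{preimage}) with $\eps(a_i)=w_i$. Setting $y=a_1 a_2 \cdots a_n$, we have $y\in\langle\eps^{-1}(H),(\eps^{-1}(H))^d\rangle$ and $\eps(y)=\eps(x)$, so $xy^{-1}\in\ker\eps$. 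Because $\ker\eps\leq \eps^{-1}(H)\leq \langle\eps^{-1}(H),(\eps^{-1}(H))^d\rangle$, we conclude $x=(xy^{-1})y$ lies in the join.

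The argument is essentially routine once one has \ref{preimage} in hand; the only subtlety is the reverse containment, where a naive elementwise chase fails because a preimage of a product is not automatically a product of preimages. The fix is the standard ``lift generators, then correct by a kernel element'' maneuver, which succeeds precisely because the kernel sits inside $\eps^{-1}(H)$ and hence inside the join. I expect this kernel-absorption step to be the one subtle point worth flagging in the write-up.
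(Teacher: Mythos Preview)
Your proof is correct and follows essentially the same two-containment strategy as the paper, invoking \ref{preimage} to identify $(\eps^{-1}(H))^d$ with $\eps^{-1}(H^g)$ and lifting a word in $H\cup H^g$ factor-by-factor. Your explicit kernel-correction step $x=(xy^{-1})y$ with $xy^{-1}\in\ker\eps\leq\eps^{-1}(H)$ is in fact cleaner than the paper's own write-up of that direction, which glosses over this point.
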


\begin{proof}
Let $x\in\langle H,H^g\rangle$. Then $x=a_1b_2a_2b_2\cdots a_nb_n$ where $a_i\in H, b_i\in H^g$. Since $\eps$ is an epimorphism, there exists a $y\in D$ such that $\eps(y)=x$.  Furthermore, for $a_i, b_i \in G$, there exists $c_i,d_i \in D$ respectively such that $\eps(c_i)=a_i$ and $\eps(d_i)=b_i$.  Then $\eps(y)=x\in\langle H,H^g\rangle$ implies $y \in \eps^{-1}\langle H,H^g\rangle = \langle\eps^{-1}(H),\eps^{-1}(H^g)\rangle = \langle\eps^{-1}(H),\eps^{-1}(H)^d\rangle$ by \ref{preimage}.

Let $x \in \langle\eps^{-1}(H),(\eps^{-1}(H))^d\rangle$. Then $x=k_1\ell_1\cdots k_m\ell_m$, where $k_i\in \eps^{-1}(H),\ell_i\in (\eps^{-1}(H))^d=\eps^{-1}(H^g)$, and $\eps(x)=\eps(k_1)\eps(\ell_1)\cdots\eps(k_m)\eps(\ell_m)$.  Therefore, $\eps(x)\in\langle H,H^g\rangle$, and $x\in \eps^{-1}\langle H,H^g\rangle$.\end{proof}

We present the formal definition of abnormal subgroups from \cite{dh92}. 

\begin{definition}
Consider $H\leq G$. $H$ is \textbf{abnormal} in $G$, written $H\abn G$, if $g\in\langle H,H^{g}\rangle$,  for all $g\in G$.
\end{definition}

Note if $H \abn G$ and $G$ and $G'$ are groups, then the following properties are satisfied.\cite{dh92}
\begin{enumerate}
\item If $H \leq L \leq G$, then $H \abn L$ and $L \abn G$.
\item $N_G(H) = H$.
\item If $\phi : G \rightarrow G'$ is a homomorphism, then $\phi(H)\abn \phi(G)$.
\end{enumerate}

\begin{example}\label{diag}
Let $G$ be a group. The diagonal subgroup of $G\times G$ is defined as $\Delta=\{(g,g)| \hspace{.1cm} g\in G\}$.  Observe that $\Delta\cong G$.

Consider $G = A_5$, the alternating group of degree of five, which is a classic example of a simple, nonsolvable group. By \cite{thevenaz97}, we know $G$ is simple if and only if $\Delta$ is maximal in $G\times G$. Therefore, $\Delta$ is maximal in $A_5\times A_5$.  Because $\Delta$ is maximal and not normal in $A_5\times A_5$,  $\Delta\abn A_5\times A_5$.
\end{example}

In \cite{brewster09}, abnormal subgroups were characterized for direct products of two groups, where one of the direct factors is solvable.  This result is presented in the following lemma. 

\begin{lemma}\label{dpabnormal}
Let $D = U_1 \times U_2$, where either $U_1$ or $U_2$ is solvable. Let $K\leq D$. Then $K \abn D$ if and only if $\pi_i(K) \abn U_i$ and $K = \pi_1(K) \times \pi_2(K)$.
\end{lemma}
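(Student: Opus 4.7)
The plan is to prove both implications, with the converse a direct calculation and the forward direction requiring the solvability hypothesis in an essential way. For the converse, I would assume $\pi_i(K) \abn U_i$ and $K = \pi_1(K) \times \pi_2(K)$, and for an arbitrary $(u_1,u_2) \in D$ observe that $K^{(u_1,u_2)}$ splits as $\pi_1(K)^{u_1} \times \pi_2(K)^{u_2}$. The join of two direct products of subgroups of the factors is the direct product of the joins, so
$$\langle K, K^{(u_1,u_2)}\rangle = \langle \pi_1(K), \pi_1(K)^{u_1}\rangle \times \langle \pi_2(K), \pi_2(K)^{u_2}\rangle,$$
and abnormality of each $\pi_i(K)$ in $U_i$ places $u_i$ in the $i$th factor, so $(u_1,u_2)$ lies in the join.

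For the forward direction, I would assume $K \abn D$. Since the projections are epimorphisms $D \to U_i$, property (iii) listed after the definition of abnormal subgroups immediately yields $\pi_i(K) \abn U_i$. The substantive step is the splitting $K = \pi_1(K) \times \pi_2(K)$, and for this I would invoke Goursat's lemma to write $K$ as the subgroup of $A_1 \times A_2$ associated to a quintuple $(A_1, B_1, A_2, B_2, \phi)$, with $A_i = \pi_i(K)$, $B_1 = \{u \in U_1 : (u,1)\in K\}$ and $B_2 = \{u \in U_2 : (1,u)\in K\}$ normal in $A_1$, $A_2$ respectively, and $\phi : A_1/B_1 \to A_2/B_2$ an isomorphism. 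The splitting $K = A_1 \times A_2$ is equivalent to $M := A_1/B_1 \cong A_2/B_2$ being trivial, so the goal becomes to derive a contradiction from $M$ nontrivial.

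The hard part is extracting a contradiction from nontriviality of $M$, and this is exactly where solvability does the work. Setting $L = A_1 \times A_2$, property (i) gives $K \abn L$, and applying property (iii) to the quotient $L \to L/(B_1 \times B_2) \cong M \times M$ shows that the image of $K$, namely the twisted diagonal $\{(m,\phi(m)) : m \in M\}$, is abnormal in $M \times M$. Since one of $U_1, U_2$ is solvable, so is $A_i$, hence so is $M$; a nontrivial solvable $M$ then has a nontrivial abelian quotient $\overline{M} := M/[M,M]$. One more application of property (iii), to the projection $M \times M \to \overline{M} \times \overline{M}$, yields an abnormal twisted diagonal in an abelian group. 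I would close the argument by noting that in an abelian ambient group every subgroup is normal, while any subgroup that is both normal and abnormal must equal the whole group (since $g \in \langle H, H^g\rangle = H$ for all $g$ whenever $H$ is normal). A twisted diagonal in $\overline{M} \times \overline{M}$ has size $|\overline{M}|$, strictly less than $|\overline{M}|^2$, giving the desired contradiction and forcing $M$ to be trivial.
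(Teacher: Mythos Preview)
Your argument is correct in both directions. The converse is a clean componentwise computation, and the forward direction is handled well: projecting to obtain $\pi_i(K)\abn U_i$, then using Goursat's description of $K$ and pushing the abnormality of $K$ in $A_1\times A_2$ down to a twisted diagonal in $M\times M$ and finally in the abelianization $\overline{M}\times\overline{M}$ is exactly the right strategy, and your closing contradiction (a proper subgroup of an abelian group cannot be abnormal) is valid. One small point worth making explicit: the image of the twisted diagonal in $\overline{M}\times\overline{M}$ really is again a twisted diagonal because $\phi$, being an isomorphism, carries $[M,M]$ onto $[M,M]$ and hence induces an isomorphism $\overline{\phi}:\overline{M}\to\overline{M}$; you use this implicitly when asserting the image has size $|\overline{M}|$.

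That said, the paper does not give its own proof of this lemma at all: it is quoted from \cite{brewster09} (Brewster, Mart{\'{\i}}nez-Pastor, P{\'e}rez-Ramos) and used as a black box. So there is no in-paper argument to compare against; your proposal simply supplies a complete proof where the paper cites the literature.
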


Observe that the above lemma applies when either $U_1$ or $U_2$ is solvable. If $U_1$ or $U_2$ is not solvable, the result will not work.  For example, in $A_5 \times A_5$, $\Delta = A_5$ cannot be written as a product of two subgroups. Therefore, one can see that \ref{dpabnormal} does not apply. 

To characterize abnormal subgroups of central products, a correspondence theorem similar to \ref{correspondence} and \ref{sncorrespondence} was needed to establish a relationship between abnormal subgroups of a direct product of two groups and abnormal subgroups of a central product of two groups. The following lemma provides the necessary correspondence.

\begin{lemma}\label{cpabnormal}
Let $G = U_1U_2$ be the central product of subgroups $U_1$ and $U_2$ defined by the epimorphism $\eps: D\ra G$, where $D=U_1\times U_2$. Let $H\leq G$.
Then $H\abn G$ if and only if $\eps^{-1}(H)\abn D$. 
\end{lemma}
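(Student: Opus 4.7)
The plan is to translate the abnormality condition across $\eps$ using the two technical lemmas \ref{preimage} and \ref{joinpreimage}. Since abnormality is a statement about one element at a time (for every group element $g$, we have $g \in \langle H, H^g\rangle$), and since $\eps$ is surjective, every element of $G$ is of the form $\eps(d)$ for some $d \in D$. So the proof amounts to checking that, for a fixed $d \in D$ with $g = \eps(d)$, the statement ``$g \in \langle H, H^g\rangle$'' is equivalent to ``$d \in \langle \eps^{-1}(H), (\eps^{-1}(H))^d\rangle$''.

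For the forward direction, assume $H \abn G$, and let $d \in D$ with $g = \eps(d)$. By hypothesis, $g \in \langle H, H^g\rangle$, so $d \in \eps^{-1}(\langle H, H^g\rangle)$. Applying \ref{joinpreimage}, this preimage equals $\langle \eps^{-1}(H), (\eps^{-1}(H))^d\rangle$, so $d$ lies in this join, giving $\eps^{-1}(H) \abn D$.

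For the reverse direction, assume $\eps^{-1}(H) \abn D$, and let $g \in G$. Use surjectivity of $\eps$ to choose $d \in D$ with $\eps(d) = g$. By hypothesis, $d \in \langle \eps^{-1}(H), (\eps^{-1}(H))^d\rangle$. Applying $\eps$, which is a homomorphism, sends this join to $\langle \eps(\eps^{-1}(H)), \eps((\eps^{-1}(H))^d)\rangle$. Surjectivity of $\eps$ gives $\eps(\eps^{-1}(H)) = H$, and a direct computation (or \ref{preimage} combined with surjectivity) gives $\eps((\eps^{-1}(H))^d) = H^{\eps(d)} = H^g$. Thus $g = \eps(d) \in \langle H, H^g\rangle$, so $H \abn G$.

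There is no genuine obstacle here once the two preimage lemmas are in hand; the only thing to be careful about is that the conjugating element $d$ on the $D$-side is the \emph{same} element whose image is $g$, so that \ref{joinpreimage} applies with matching data. The entire argument is a clean transport of the defining equivalence across the epimorphism, exactly parallel to how \ref{correspondence} and \ref{sncorrespondence} were used in the normal and subnormal cases.
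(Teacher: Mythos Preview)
Your proof is correct and follows essentially the same approach as the paper: both directions transport the abnormality condition across $\eps$ using surjectivity together with Lemma~\ref{joinpreimage} (and implicitly Lemma~\ref{preimage}) to identify $\eps^{-1}\langle H, H^g\rangle$ with $\langle \eps^{-1}(H), (\eps^{-1}(H))^d\rangle$ for $g=\eps(d)$. The only cosmetic difference is the order in which you present the two implications.
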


\begin{proof}
For ease of notation, define $K=\eps^{-1}(H)$.

If $K\abn D$, then $x\in\langle K,K^x\rangle\text{ }$, $\forall x\in D$. This implies  $\eps(x)\in\langle\eps(K),\eps(K)^{\eps(x)}\rangle$. Because $\eps$ is an epimorphism, $\eps(D)=G$, and without loss of generality, $\eps(x)$ corresponds to an arbitrary element $y$ of $G$. Therefore, $y\in\langle H,H^y\rangle$, $\forall y\in G$ and $H\abn G$.

Suppose $H\abn G$. By definition $g\in \langle H,H^g\rangle$, $\forall g\in G$. Because $\eps$ is surjective, we know there exists some $d\in D$ such that $\eps(d)=g$.  Therefore, $d \in \eps^{-1}\langle H, H^g\rangle$. By \ref{joinpreimage} $\eps^{-1}\langle H, H^g\rangle=\langle K,K^d\rangle$, where $\eps(d) = g$. Since $\eps$ is an epimorphism, $d \in \langle K,K^d\rangle$, $\forall d\in D$. Hence $K\abn D$. 
\end{proof}

To further characterize abnormal subgroups of the central product $U_1U_2$, we generalized \ref{dpabnormal}. This generalization provides an efficient way to find abnormal subgroups of $U_1U_2$ and can be easily implemented in computer algebra systems such as GAP or Sage. 

The following result provides a summary of the lemmas above.

\begin{thm}\label{abntfae}
Let $G = U_1U_2$ be the central product of subgroups $U_1$ and $U_2$ defined by the epimorphism $\eps: D \rightarrow G$ where $D = U_1 \times U_2$. Let $H \leq G$. Suppose $U_1$ or $U_2$ is solvable. Then the following are equivalent:
\begin{enumerate}
\item $H\abn G$.
\item $\eps^{-1}(H)\abn D$.
\item $\pi_i(K) \abn U_i$ and $K = \pi_1(K) \times \pi_2(K)$, where $K = \eps^{-1}(H)$.
\end{enumerate}
\end{thm}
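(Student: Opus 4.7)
The plan is to assemble this theorem directly from the two lemmas already established in this section, since each of the required biconditionals is essentially a single application of a prior result. Concretely, I would organize the proof as a short chain $(i) \iff (ii) \iff (iii)$, with each step justified by citing one of the preceding lemmas.

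For $(i) \iff (ii)$, I would apply Lemma \ref{cpabnormal}, which was just proved and asserts precisely that $H \abn G$ if and only if $\eps^{-1}(H) \abn D$. Setting $K = \eps^{-1}(H)$ gives the first equivalence immediately, and I would note that this step does not require the solvability hypothesis. For $(ii) \iff (iii)$, I would apply Lemma \ref{dpabnormal} to the direct product $D = U_1 \times U_2$; since by hypothesis one of $U_1$ or $U_2$ is solvable, that lemma applies and yields $K \abn D$ if and only if $\pi_i(K) \abn U_i$ for $i=1,2$ and $K = \pi_1(K) \times \pi_2(K)$. This is precisely the content of $(ii) \iff (iii)$, and chaining the two biconditionals completes the argument.

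There is no substantive obstacle here because all the real work has been carried out in the preceding lemmas; the theorem is a clean repackaging of those results into a single equivalence. It is worth observing that the solvability hypothesis is needed only for the second biconditional, so the equivalence $(i) \iff (ii)$ in fact holds for central products of any two finite groups without any structural assumption on the factors. The hypothesis enters only at the point of invoking the characterization in Lemma \ref{dpabnormal}, which is where the solvability condition is essential (as illustrated by the diagonal subgroup of $A_5 \times A_5$ in \ref{diag}).
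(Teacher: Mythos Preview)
Your proposal is correct and matches the paper's intended approach exactly. The paper presents Theorem~\ref{abntfae} explicitly as ``a summary of the lemmas above'' and does not write out a separate proof; your chain $(i)\iff(ii)$ via Lemma~\ref{cpabnormal} and $(ii)\iff(iii)$ via Lemma~\ref{dpabnormal} is precisely the assembly the authors have in mind, and your remark that solvability is needed only for the second equivalence is a useful clarification.
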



\begin{thm}\label{abniff}
Let $G = U_1U_2$ be the central product of subgroups $U_1$ and $U_2$ defined by the epimorphism $\eps: D\ra G$, where $D=U_1\times U_2$ and either $U_1$ or $U_2$ is solvable. If $H=V_1V_2$ is a subgroup of $G$ such that $V_i\leq U_i$ and $\eps^{-1}(H)=V_1\times V_2$ contains $\ker\eps$, then $H \abn G$ if and only if $V_i \abn U_i$, for $i = 1,2$. 
\end{thm}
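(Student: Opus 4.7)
The plan is to derive this theorem as a direct corollary of Theorem~\ref{abntfae}. Set $K = \eps^{-1}(H)$ and, following the conventions established earlier in the paper, identify each $V_i \leq U_i$ with its image $\overline{V}_i \leq \overline{U}_i$ inside $D$, so that the hypothesis reads $K = \overline{V}_1 \times \overline{V}_2$ in $D = U_1 \times U_2$.

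First I would invoke Theorem~\ref{abntfae}, which (under the solvability hypothesis on $U_1$ or $U_2$) says that $H \abn G$ is equivalent to the two conditions $\pi_i(K) \abn \overline{U}_i$ for $i=1,2$ and $K = \pi_1(K) \times \pi_2(K)$. Next, from the hypothesis $K = \overline{V}_1 \times \overline{V}_2$ I can read off that $\pi_i(K) = \overline{V}_i$, and that the direct-product decomposition $K = \pi_1(K) \times \pi_2(K)$ is automatic. Thus the characterization collapses to the two statements $\overline{V}_i \abn \overline{U}_i$ for $i=1,2$.

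Finally, the natural isomorphism $\overline{U}_i \cong U_i$ carries $\overline{V}_i$ to $V_i$, and since abnormality is preserved under isomorphisms (one of the standard properties of abnormal subgroups recalled after the definition in this section), this is equivalent to $V_i \abn U_i$ for $i = 1, 2$, as desired. Since the argument is a biconditional chain, both directions are handled simultaneously.

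I do not anticipate any real obstacle: the theorem is essentially a convenient internal restatement of Theorem~\ref{abntfae} that removes the need to pass explicitly to the external direct product, and the only genuine verification is bookkeeping around the identification of $V_i$ with $\overline{V}_i$.
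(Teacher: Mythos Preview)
Your proposal is correct and follows essentially the same route as the paper: the paper invokes Lemma~\ref{cpabnormal} and Lemma~\ref{dpabnormal} separately, whereas you invoke their packaged form Theorem~\ref{abntfae}, but the substance is identical. One minor notational slip: since $\pi_i:D\to U_i$, you should write $\pi_i(K)=V_i\abn U_i$ rather than $\pi_i(K)=\overline{V}_i\abn \overline{U}_i$, which in fact makes your final isomorphism step unnecessary.
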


\begin{proof}
$``\Rightarrow"$:  From \ref{cpabnormal} we know $H\abn G$ if and only if $\ker\eps\leq\eps^{-1}(H)\abn D$.
By \ref{dpabnormal}, $\eps^{-1}(H)\abn D$ if and only if $\pi_i(\eps^{-1}(H))\abn U_i$ and $\eps^{-1}(H)=\pi_1(\eps^{-1}(H))\times\pi_2(\eps^{-1}(H))$.

Let $V_i=\pi_i(\eps^{-1}(H))$. Consider the restriction of $\eps$ to $\eps^{-1}(H)$ given by:

$$
\eps: V_1\times V_2\ra \eps(V_1\times V_2)
$$

It is left to the reader to verify that $\eps(\overline{V}_i)=V_i$.  Hence, by \ref{cpepi} so $\eps(V_1\times V_2)$ is an internal central product of $V_1$ and $V_2$ and $\eps(V_1\times V_2)=V_1V_2$.

But note that $V_1\times V_2=\eps^{-1}(H)$ and $\eps(\eps^{-1}(H))=H$. Therefore, $H=V_1V_2$ where $V_i\abn U_i$, for $i =1,2$.

$``\Leftarrow"$: Suppose $H=V_1V_2$ where $V_i\leq U_i$, for $i=1,2$. By assumption, $\eps^{-1}(H)=V_1\times V_2$ and $V_i \abn U_i$, for $i = 1,2$. By \ref{dpabnormal}, we know  $V_1\times V_2\abn U_1\times U_2$ if and only if $V_i\abn U_i$. Finally, since $H\abn G$ if and only if $\eps^{-1}(H)\abn D$ by \ref{cpabnormal}, $H\abn G$, as desired.

\end{proof}









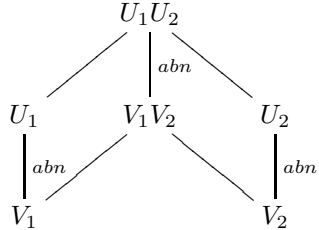
\begin{figure}[h]
\begin{center}
$$\xymatrix{
 & {U_1U_2}\ar@{-}[d]^{abn} \ar@{-}[dl] \ar@{-}[dr]\\
{U_1} \ar@{-}[d]^{abn} & {V_1V_2} \ar@{-}[dr] \ar@{-}[dl] & 
{U_2} \ar@{-}[d]^{abn} \\
{V_1} & & {V_2}
}$$
\caption{Note $H = V_1V_2$ must be abnormal in $G$ for accuracy. As shown, $H$ must be able to be written as a central product of $V_1$ and $V_2$ where $V_i$ is an abnormal subgroup $U_i$ for $i = 1,2$. Lines without arrowheads indicate subgroup containment with smaller groups below the groups in which they are contained.}
\end{center}
\end{figure}

Note for this result about abnormal subgroups of central products of two groups, solvability of one of the central factors is still required. 

\section{Future Work}
While characterizing normal, subnormal, and abnormal subgroups in a central product of groups, many other open questions did arise. One open question is to characterize pronormal subgroups of central products. Results on pronormal subgroups of direct products, as in \cite{brewster09}, may be especially useful and inspirational for this direction of work.

Additionally, we would like to explore the characterization of abnormal subgroups of direct products and central products of two non-solvable groups.

\section{Acknowledgments}
We would like to thank the University of Wisconsin-Stout REU for providing us with the opportunity to pursue this project, and the NSF for funding this project via NSF Grant DMS-1062403.


\end{document}